\newcommand{\ds}{\displaystyle}
\newcommand{\inv}{\operatorname{inv}}
\renewcommand\P{\operatorname{\mathbb P{}}}
\newcommand{\B}[2]{\mathbb{B}^{#1,#2}}
\newcommand{\W}[2]{\mathbb{W}^{#1,#2}}
\newcommand{\al} {\alpha}
\newcommand{\be}{\begin{equation}}
\newcommand{\ee}{\end{equation}}
\newcommand{\refL}[1]{Lemma~\ref{#1}}
\newcommand{\refS}[1]{Section~\ref{#1}}
\newtheorem{theorem}{Theorem}[section]
\newtheorem{lemma}[theorem]{Lemma}
\newtheorem{prop}[theorem]{Proposition}
\newtheorem{corollary}[theorem]{Corollary}
\newtheorem{remark}[theorem]{Remark}
\newtheorem{open}[theorem]{Open problem}
\newtheorem{example}[theorem]{Example}
\begin{document}

\title{Reverse juggling processes}

\author{Arvind Ayyer} 
\address{Arvind Ayyer, Department of Mathematics, Indian Institute of Science, Bangalore - 560012, India}
\email{arvind@iisc.ac.in}

\author{Svante Linusson}
\address{Svante Linusson, Department of Mathematics, KTH-Royal Institute of Technology, 
  SE-100 44, Stockholm, Sweden.}
\email{linusson@math.kth.se}
\subjclass[2010]{60J10, 60C05, 05A05}
\keywords{Juggling process, stationary distribution, random matrices, inversion polynomial, multispecies}
\date{\today}

\begin{abstract}
Knutson introduced two families of reverse juggling Markov chains (single and multispecies) motivated by the study of random semi-infinite matrices over $\mathbb{F}_q$.
We present natural generalizations of both chains by placing generic weights that still lead to simple combinatorial expressions for the stationary distribution.
For permutations, this is a seemingly new multivariate generalization of the inversion polynomial.\end{abstract}
\maketitle

\section{Introduction}
\label{S:intro}
Juggling has been studied from different mathematical perspectives, e.g. from combinatorics \cite{Buhleretal, Stadler2}, probability \cite{Warrington, LeskelaVarpanen, EngstromLeskelaVarpanen, ABCLN, ABCN-2015}, and algebraic geometry\cite{KnutsonLamSpeyer}. 
In recent work, Knutson returns to the study of juggling inspired by a matrix model \cite{knutson-2018}.

In the matrix model defined by Knutson, we have a random
 semi-infinite (to the right) matrix with $b$ rows and entries from $\mathbb{F}
 \equiv GF(q)$ generated as follows. At each time step, a uniformly random column from $\mathbb{F}^b$ is chosen and added to the left of the current matrix. This is easily seen to be a Markov chain.
Knutson studies two projections of this chain. In the first,  the set of matrices is stratified by the columns where the rank increases, when going from left to right. The positions of these columns are denoted by the configuration ${\bf n}=(n_1,\dots, n_b)$, where $n_1 < \cdots < n_b$. As in Example \ref{E:matrix1}, a ball is positioned in every such column, which is placed above the matrix. When the matrix is extended with a new random column to the left, there will be shift of the balls. If the new column is in the linear span of the leftmost $n_i$ columns but not in the linear span of the leftmost $n_{i-1}$ columns then this will result in ball number $i$ moving to the front.
The Markov chain on matrices then projects to the one on increasing $b$-tuples
of integers with the following transitions rates.
\begin{equation}
\label{knutson-rates}
{\bf n} \to
\begin{cases}
(n_1+1,\dots,n_b+1) & \text{with probability $\frac{1}{q^b}$}, \\
(1, n_1+1,\dots,\widehat{n_i+1}, \dots, n_b+1) & \text{with probability $\frac{1}{q^{b-i}}-\frac{1}{q^{b-1+1}}$ for $1 \leq i \leq b$},
\end{cases}
\end{equation}
where $\hat x$ means that the element $x$ should be omitted. The first case happens if and only if the new column is the all zero column.
The movement of the balls is the time-reversed version of what has become known as a juggling Markov chain, called the Multivariate Juggling Markov Chain (MJMC) in \cite{ABCN-2015}.

\begin{example}
\label{E:matrix1}
Let $b=4, q=3$. In the example below the new column causes the third ball from the left to move to the front and all other balls move one step to the right. 
This happens with probability $1/3(1-1/3)$.
\[\renewcommand{\arraystretch}{1}
\begin{array} {c | c | c | c | c | c  | c | c} 
\bullet  & \bullet & &\bullet  & & & \bullet & \\
 \hline
\hline
1 & 2 & 0& 0& 0& 2& 0& \ldots \\
 0& 0 & 0& 1 & 1 & 1 & 0 & \ldots\\
 0&  1 & 2  & 0  & 2  & 0 & 1 & \ldots \\
 0& 0 & 0 & 0 & 0 & 0 & 2 & \ldots \\
 \hline
\end{array}
\longrightarrow
\begin{array} {c| c | c | c | c | c | c  | c | c} 
\color{red} \bullet  & \bullet & \bullet & & & & & \bullet & \\
 \hline
\hline
\color{red} 0&1 & 2 & 0& 0& 0& 2& 0& \ldots \\
\color{red} 1 &0& 0 & 0& 1 & 1 & 1 & 0 & \ldots\\
\color{red} 2 &0&  1 & 2  & 0  & 2  & 0 & 1 & \ldots \\
\color{red} 0 &0& 0 & 0 & 0 & 0 & 0 & 2 & \ldots \\
 \hline
\end{array}
\]
\end{example}

\medskip

Knutson showed that the stationary probability distribution of the reverse juggling chain is given by a simple formula. In Section~\ref{S:single-infinite}, we generalize this process by setting the jump probability of ball $j$ to $x_j$, which we call the Infinite Reverse Juggling Markov Chain (IRJMC). We show that this continues to be a nice solvable model, in the sense that the stationary distribution continues to have a simple expression. First though, we focus on the window of the the first $m$ positions ($m >b$) of the IRJMC in Section \ref{S:single-finite}, which we call the Reverse Juggling Markov Chain (RJMC) and prove a formula for the stationary distribution and a property of ultrafast mixing.

The second projection of the Markov chain on the semi-infinite matrices studied by Knutson, comes from a finer stratification of the space of matrices. One way to think of the first model is that we want to reduce the semi-infinite matrix to a matrix of zeros and $b$ $1$'s where we are allowed to use any row operation and rightward column operation. Here, by rightward column operation, we mean that we can add the content of column $i$ to column $j$, where $i<j$. The $1$'s will then be in the columns indicated by the balls  $(n_1,\dots, n_b)$. For the second projection, we allow only downward row operations and rightward column operations, and we then record the row, counted from above, where the 1 in that column is positioned. 
 We now think of the row number as the labelling of the ball. The Markov chain on matrices now projects onto a chain whose states are labelled balls. 
For this model, one may prove that the balls change by a bumping path as follows. 
A ball is chosen with the same probabilities as in the first chain as given in \eqref{knutson-rates} and moves to the left. As that ball moves to the left it will bump (replace) a ball with smaller label with probability 
$1-1/q$ and move on with probability $1/q$. Then the ball (or the bumped ball) will continue left and for the next ball with a smaller label, it will again either bump it or move on as above. 
This process continues until a ball reaches the front. 
See Example~\ref{E:matrix2} for an example of a state and a transition. 
For a proof that this gives the right transition probability, see \cite[Section 4]{knutson-2018}. Knutson generalised this process on labelled balls to one where there are potentially several balls carrying a particular label. He gave a simple product formula for the stationary distribution of this chain.

\begin{example}
\label{E:matrix2}
Let $b=4, q=3$. In the example below the new column causes the ball labelled 4 to start a bumping path including the balls labelled 3 and 1. This happens with probability $(1-1/3)^3 1/3$ because there are $2^3 = 8$ vectors which have a nonzero component along the vectors $(1,0,0,0)$, $(0,0,1,0)$ and $(0,0,0,1)$ and a zero component along $(0,1,0,0)$, out of a total of $3^4$ vectors.

\[\renewcommand{\arraystretch}{1.1}
\begin{array} {c | c | c | c | c | c} 
\raisebox{.5pt}{\textcircled{\raisebox{-1.0pt} {1}}}&
\raisebox{.5pt}{\textcircled{\raisebox{-1.0pt} {3}}}
& & \raisebox{.5pt}{\textcircled{\raisebox{-1.0pt} {2}}} & \raisebox{.5pt}{\textcircled{\raisebox{-1.0pt} {4}}} & \\
 \hline
\hline
1 & 0 & 1& 0&  0& \ldots \\
 0& 0 & 0& 1 &  0& \ldots\\
 0& 1 & 2& 0  &  0 &\ldots \\
 0& 0 & 0& 0  &  1 &\ldots \\
 \hline
\end{array}
\quad\longrightarrow\quad
\begin{array} {c| c | c | c | c | c| c} 
\color{red} 
\raisebox{.5pt}{\textcircled{\raisebox{-1.0pt} {1}}} &
\raisebox{.5pt}{\textcircled{\raisebox{-1.0pt} {3}}} &
\raisebox{.5pt}{\textcircled{\raisebox{-1.0pt} {4}}} & &
\raisebox{.5pt}{\textcircled{\raisebox{-1.0pt} {2}}} & &\\

 \hline
\hline
\color{red} 1 & 1& 0 & 1& 0& 0& \ldots \\
\color{red} 0 & 0& 0 & 0& 1 &  0&\ldots\\
\color{red} 1 & 0& 1 & 2& 0  &  0&\ldots \\
\color{red} 1 & 0& 0 & 0& 0  &  1 &\ldots \\
\hline
\end{array}
\]

\end{example}

\medskip
This process turns out to be closely related to the time-reversal of the Multispecies Juggling Markov Chain (MSJMC), which was studied in \cite{ABCLN} starting from a very different motivation. 
In Section \ref{S:multi-infinite}, we study the Infinite Multispecies Reverse Juggling Markov Chain (IMRJMC) which generalises Knutson's second model. This generalization is more intricate than that of the first model. We have two sets of variables for the transition probabilities, one for jumping and one for bumping. 
We prove explicit formulas for the stationary probability distribution, which turns out to have separate factors in these sets of variables.
A key step in the proof is the study of the stationary distribution of the same chain, where we ignore the empty spaces. We call this the Multispecies Reverse Juggling Markov Chain (MRJMC) and we study it in Section \ref{S:multi-finite}. 

Finally, we end with some remarks and suggest open problems in Section~\ref{S:open}.

\section{Reverse juggling Markov chain} 
\label{S:single-finite}

We first define the Reverse Juggling Markov Chain (RJMC).
Fix $m,b \in \mathbb{N}$ such that $b \leq m$ and an arbitrary probability distribution on $\{0,1,\dots,b\}$ with $\mathbb{P}(i) = x_i$. 
Let $\mathbb{B}^{m,b}$ be the set of binary words of length $m$ with at most $b$ ones. 
Let $w = (w_1,\dots,w_m)$ be a word with $\ell$ ones in the first $m-1$ positions.  The letter in the last position is irrelevant for the definition of the transitions, as long as the new word belongs to the state space. Then the transitions of the RJMC are as follows.

\begin{enumerate}
\item With probability $x_{0}$, go to state $(0,w_1,\dots,w_{m-1})$.

\item With probability $x_i$ for  $1 \leq i \leq \ell$, move the $i$'th one from the left to the front, replace it by a zero, and shift everything to the right.

\item With probability $x_{\ell+1}+\cdots +x_b$, go to state $(1,w_1,\dots,w_{m-1})$. (This clearly does not happen if $\ell=b$.)
\end{enumerate}

Let $z_i = x_0 + \cdots + x_i$ for $0 \leq i \leq b$ and $\bar z_i = 1- z_i =  x_{i+1} + \cdots + x_{b}$ for $0 \leq i \leq b$. 

\begin{prop}
\label{prop:RJMC-irred}
The RJMC is irreducible and aperiodic.
\end{prop}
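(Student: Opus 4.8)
The plan is to treat the all-zeros word $\mathbf{0}=(0,\dots,0)$ as a hub, showing that every state reaches $\mathbf 0$ and that $\mathbf 0$ reaches every state; together these give irreducibility, and a self-loop at $\mathbf 0$ then gives aperiodicity. The structural observation I would use repeatedly is that every transition prepends a single letter to the front of the word and shifts the remaining entries one step to the right, discarding $w_m$. In particular transition (1) prepends a $0$ and transition (3) prepends a $1$ without altering the relative order of the interior letters; for irreducibility these two ``clean'' moves alone will suffice, and transition (2) is never needed.

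First I would check that $\mathbf 0$ is reachable from an arbitrary word $w$. A single application of transition (1) moves every one one step to the right and discards whatever occupies position $m$. Hence after at most $m$ consecutive applications of (1) every one has been pushed off the right end and the word has become $\mathbf 0$; the number of ones is nonincreasing along the way, so each intermediate word lies in $\B{m}{b}$, and each step has probability $x_0>0$.

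Next I would realize an arbitrary target $v=(v_1,\dots,v_m)\in\B{m}{b}$ with, say, $k\le b$ ones as the end of an explicit path out of $\mathbf 0$. Reading $v$ from right to left, I prepend the letters $v_m,v_{m-1},\dots,v_1$ in this order, using transition (1) whenever the letter is $0$ and transition (3) whenever it is $1$. Starting from $\mathbf 0$, after $t$ of these steps the word equals $(v_{m-t+1},\dots,v_m,0,\dots,0)$, so the discarded entry in position $m$ is always $0$ and no one is ever lost, while the number of ones never exceeds $k$. Whenever a $1$ is prepended, the current count $\ell$ of ones among the first $m-1$ letters satisfies $\ell+1\le k\le b$, so transition (3) fires with probability $\bar z_\ell=x_{\ell+1}+\cdots+x_b\ge x_b>0$. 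Thus every prescribed step has positive probability and $\mathbf 0\to v$ is realized.

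Finally, aperiodicity is immediate: at $\mathbf 0$ we have $\ell=0$, so transition (1) returns to $\mathbf 0$ with probability $x_0>0$, giving a self-loop and hence period $1$; since aperiodicity is a class property and the chain is irreducible, all states have period $1$. The only delicate point—and what I expect to be the main obstacle—is the bookkeeping in the reconstruction step, namely verifying that the ``at most $b$ ones'' constraint is respected and that the transition used at each step has positive probability. This is precisely what the inequality $\ell+1\le k\le b$ delivers, and it is the place where positivity of the weights (here $x_0>0$ and $x_b>0$, both valid for generic weights) is genuinely used.
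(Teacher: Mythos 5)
Your proof is correct and takes essentially the same route as the paper's: reach $(0,\dots,0)$ from any state by repeatedly prepending $0$'s, rebuild an arbitrary target word from $(0,\dots,0)$ by prepending its letters from right to left, and deduce aperiodicity from the self-loop at the all-zeros state. You merely spell out the bookkeeping (the constraint $\ell+1\le k\le b$ and the positivity of $x_0$ and $\bar z_\ell$) that the paper's two-sentence proof leaves implicit.
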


\begin{proof}
It is clear that one can get from any binary word to $(0,\dots,0)$ by repeatedly
adding $0$ to the left. For the converse, one adds $0$'s and $1$'s to the left until one obtains the desired word. This proves the irreducibility. Since $(0,\dots,0)$ goes to itself with probability $x_0$, the chain is aperiodic.
\end{proof}

We denote the transition matrix and the stationary distribution of the RJMC by $M$ and $\pi$ respectively and recall that $\pi M = \pi$.

\begin{theorem}
\label{thm:RJMC-ss}
For a configuration $w$, let $k$ be the number of $1$'s in $w$, and the positions of the $1$'s be given by $n_1,\dots,n_k$.
Then the stationary distribution of the chain is given by
\be \label{RJMC-ss}
\pi(w) = \prod_{i=0}^{k-1} \bar z_{i}
\prod_{i=0}^{k} z_i^{n_{i+1}-n_{i}-1},
\ee
where $n_0 = 0$ and $n_{k+1} = m+1$.
\end{theorem}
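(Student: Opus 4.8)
The plan is to check the balance equations $\pi(w)=\sum_v \pi(v)\,M(v,w)$ directly, taking advantage of the fact that every transition inserts one new letter at the front. It helps to first rewrite the claimed formula \eqref{RJMC-ss} as a product over the $m$ positions of $w$: reading left to right, a letter preceded by exactly $i$ ones contributes $z_i$ if it is a $0$ and $\bar z_i$ if it is a $1$. (Indeed the $n_{i+1}-n_i-1$ zeros in the $i$-th gap each carry $z_i$ and the $(i+1)$-th one carries $\bar z_i$, which regroups into \eqref{RJMC-ss}.) Write $\mathrm{wt}(s)$ for this product evaluated on a word $s$ of any length. The one structural fact I will use repeatedly is that the last letter of the source is irrelevant to the dynamics, so predecessors come in pairs differing only there; since the appended letter of a word with $t$ ones contributes $z_t+\bar z_t=1$ (the boundary case $t=b$ being absorbed by $z_b=1$, $\bar z_b=0$), one gets $\sum_{c\in\{0,1\}}\pi(s,c)=\mathrm{wt}(s)$ for every word $s$ of length $m-1$. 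Applying this identity inductively on the length also shows $\sum_w \pi(w)=1$, so by irreducibility (Proposition~\ref{prop:RJMC-irred}) it is enough to verify $\pi M=\pi$.

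I would then split on the first letter of the target $w$. If $w_1=0$, the only transition that can create $w$ is prepending a $0$ (transition~(1)), whose predecessors are $(w_2,\dots,w_m,c)$ taken with probability $x_0$. Summing out $c$ with the identity above, the right-hand side collapses to $x_0\,\mathrm{wt}(w_2,\dots,w_m)$, and this equals $\pi(w)$ because the leading $0$ of $w$ contributes exactly the factor $z_0=x_0$. This case is immediate.

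The content is the case $w_1=1$, where $w$ has $k$ ones at $1=n_1<\dots<n_k$ and can be produced either by prepending a $1$ (transition~(3)) or by a ball jumping to the front (transition~(2)). Writing $Z=\prod_{i=0}^{k-1}\bar z_i$, $P=\prod_{i=1}^{k}z_i^{\,g_i}$ and $Q=\prod_{i=1}^{k}z_{i-1}^{\,g_i}$ with $g_i=n_{i+1}-n_i-1$, one has $\pi(w)=Z\,P$, and after summing out the free last letter the transition~(3) term equals $Z\,Q$. For transition~(2) I would set up the bijection between predecessors and the zeros of $w$ in positions $2,\dots,m$: converting the zero at a position $p$ lying in the $a$-th gap into a one produces a predecessor in which that one is the $a$-th ball, so its jump probability is $x_a$. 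Reading off its weight from $\mathrm{wt}$, the inserted one splits the $a$-th gap into blocks of sizes $t$ and $g_a-1-t$, giving a factor $z_{a-1}^{\,t}z_a^{\,g_a-1-t}$ while leaving the gaps to its left in their $Q$-form and those to its right in their $P$-form. Summing over the $g_a$ choices of $p$ and using $x_a=z_a-z_{a-1}$ yields $x_a\sum_{t=0}^{g_a-1}z_{a-1}^{\,t}z_a^{\,g_a-1-t}=z_a^{\,g_a}-z_{a-1}^{\,g_a}$, so the $a$-th gap contributes $Z\bigl(\prod_{i<a}z_{i-1}^{\,g_i}\bigr)\bigl(z_a^{\,g_a}-z_{a-1}^{\,g_a}\bigr)\bigl(\prod_{i>a}z_i^{\,g_i}\bigr)$.

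Summing over $a$ reduces the whole case to the algebraic identity
\[
P-Q=\sum_{a=1}^{k}\Bigl(\prod_{i<a}z_{i-1}^{\,g_i}\Bigr)\bigl(z_a^{\,g_a}-z_{a-1}^{\,g_a}\bigr)\Bigl(\prod_{i>a}z_i^{\,g_i}\Bigr),
\]
which is a hybrid telescoping: switching the factors one at a time from their $P$-form $z_i^{\,g_i}$ to their $Q$-form $z_{i-1}^{\,g_i}$ makes consecutive partial products cancel, leaving $P-Q$. Hence the transition~(2) terms total $Z(P-Q)$, and $\pi(w)=Z\,Q+Z(P-Q)=Z\,P$, as needed. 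I expect the main obstacle to be the bookkeeping in this last case—correctly enumerating the transition~(2) predecessors and tracking how inserting a ball re-indexes every factor to its right; once the contribution of a single gap is identified as $z_a^{\,g_a}-z_{a-1}^{\,g_a}$, the telescoping makes the balance transparent.
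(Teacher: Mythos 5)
Your proposal is correct and follows essentially the same route as the paper: verify the master equation by cases on $w_1$, pair predecessors differing in the (irrelevant) last letter so their weights sum to $\mathrm{wt}(w_2,\dots,w_m)$ (this is exactly Lemma~\ref{lem:sum-differ-at-last}), index the jump-type predecessors by the zero of $w$ being un-moved, evaluate each gap's geometric sum as $z_a^{g_a}-z_{a-1}^{g_a}$, and telescope. Your product-over-positions rewriting of \eqref{RJMC-ss} and the convention $z_b=1$, $\bar z_b=0$ are only presentational refinements (the latter handles the $k=b$ boundary case a bit more uniformly than the paper's separate remark), not a different argument.
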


\begin{example}
We illustrate Theorem~\ref{thm:RJMC-ss} by constructing the transition graph with $m=3$ and $b=2$ in Figure~\ref{fig:eg-32}.

\begin{figure}
\begin{center}
\includegraphics[scale=1]{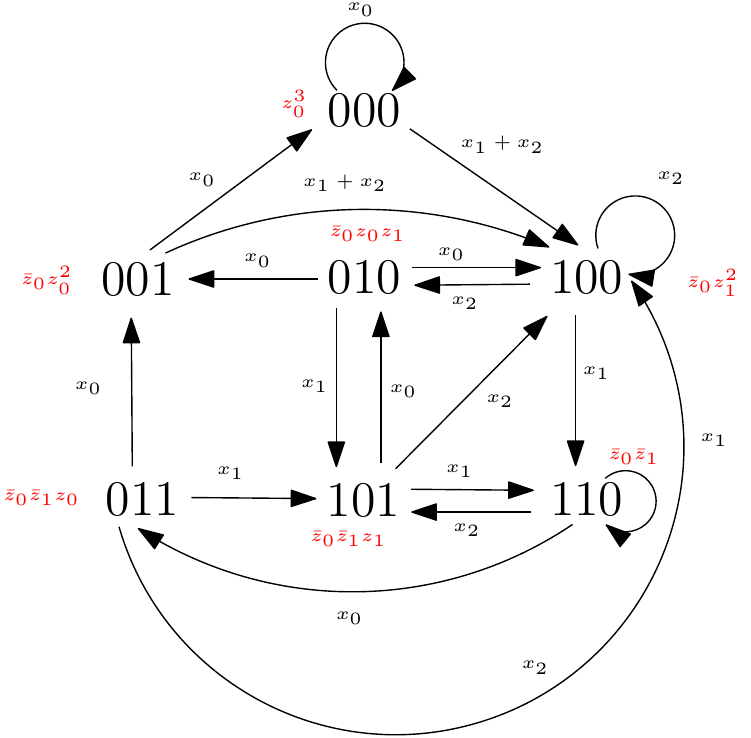}
\caption{The transition graph of the finite single species reverse juggling chain with $m=3$ and $b=2$. The transition probabilities are depicted next to the arrows. The stationary probabilities are shown in red.}
\label{fig:eg-32}
\end{center}
\end{figure}

\end{example}

The following result, which is proved by an easy computation,  will be useful in the proof of Theorem~\ref{thm:RJMC-ss}.

\begin{lemma}
\label{lem:sum-differ-at-last}
Let $\pi$ be defined as in \eqref{RJMC-ss}.
Let $v = (0,v_1,\dots,v_{m-1})$, with $k$ $1$'s in positions $n_1<\dots<n_{k}$.
Define 
 $v' = (v_1,\dots,v_{m-1},0)$ and $v'' = (v_1,\dots,v_{m-1},1)$. Then
\[
\pi(v') + \pi(v'') = \frac{1}{z_0} \pi(v) = \prod_{i=0}^{k-1} \left(\bar z_{i}
z_i^{s_{i+1}-s_{i}-1} \right)\; z_{k}^{m-s_{k}}.
\]
\end{lemma}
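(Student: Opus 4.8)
The plan is to substitute the two configurations $v'$ and $v''$ directly into the stationary formula \eqref{RJMC-ss}, compare the result with $\pi(v)$, and collapse the two terms using the single identity $z_k + \bar z_k = 1$. First I would record how the positions of the ones transform. Since $v = (0,v_1,\dots,v_{m-1})$ begins with a $0$, its $k$ ones sit at positions $n_1 < \dots < n_k$ with $n_1 \geq 2$. Deleting the leading zero shifts every one down by one, so the length-$(m-1)$ word $(v_1,\dots,v_{m-1})$ carries its ones at $n_1-1,\dots,n_k-1$. Appending a $0$ then yields $v'$, whose ones are exactly at $n_1-1,\dots,n_k-1$ (still $k$ of them), while appending a $1$ yields $v''$, whose ones are at $n_1-1,\dots,n_k-1,m$ (now $k+1$ of them).

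Next I would apply \eqref{RJMC-ss} to each word, using the boundary conventions $n_0=0$ and $n_{k+1}=m+1$. The key observation is that $\pi(v')$ and $\pi(v'')$ share a large common factor: all the $\bar z_i$ with $i \leq k-1$, the leading factor $z_0^{\,n_1-2}$, and every internal gap exponent $z_i^{\,n_{i+1}-n_i-1}$ for $1\le i\le k-1$ coincide. The two probabilities differ only in how they treat the rightmost one. For $v'$ the last one lies at $n_k-1$, and the gap to the right sentinel $m+1$ contributes $z_k^{\,m+1-n_k}$. For $v''$ there is one extra one, which produces an additional factor $\bar z_k$ in the $\bar z$-product; moreover the rightmost stretch splits into the gap from $n_k-1$ to $m$, contributing $z_k^{\,m-n_k}$, and a final empty gap from $m$ to $m+1$, contributing $z_{k+1}^{\,0}=1$. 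Writing $C$ for the shared factor, I then have $\pi(v') = C\, z_k^{\,m+1-n_k}$ and $\pi(v'') = C\,\bar z_k\, z_k^{\,m-n_k}$.

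Summing and pulling out $z_k^{\,m-n_k}$ leaves the factor $z_k + \bar z_k = 1$, so that $\pi(v') + \pi(v'') = C\, z_k^{\,m-n_k}$. Finally I would compare this against $\pi(v)$ itself: applying \eqref{RJMC-ss} to $v$ (ones at $n_1,\dots,n_k$ with $n_0=0$) gives precisely $C\,z_k^{\,m-n_k}$ except that its leading gap exponent is $z_0^{\,n_1-1}$ rather than $z_0^{\,n_1-2}$. Dividing by $z_0$ removes exactly this discrepancy, establishing $\pi(v')+\pi(v'') = \tfrac{1}{z_0}\pi(v)$; equivalently, the division by $z_0$ shifts the left sentinel from $n_0=0$ to $1$, which is what produces the leading exponent in the stated product. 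Rewriting the shared factor $C\,z_k^{\,m-n_k}$ as $\prod_{i=0}^{k-1}\bigl(\bar z_i z_i^{\,s_{i+1}-s_i-1}\bigr)\,z_k^{\,m-s_k}$ then yields the advertised closed form.

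The computation carries no genuine difficulty; the entire content of the "easy computation" lies in the boundary bookkeeping—tracking the extra factor $\bar z_k$ that $v''$ acquires from its additional one, and handling the sentinel conventions $n_0=0$ and $n_{k+1}=m+1$ so that both the leading $z_0$-exponent and the trailing $z_k$-exponent come out correctly. The only place an error could creep in is the alignment of these two end terms, so I would verify the leading and trailing exponents explicitly and let the cancellation $z_k+\bar z_k=1$ do the rest.
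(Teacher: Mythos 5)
Your computation is correct and is exactly the direct substitution the paper has in mind when it calls this an ``easy computation'' (no written proof is given there): you track the shifted positions of the ones in $v'$ and $v''$, isolate the common factor, and collapse the two terms via $z_k+\bar z_k=1$, with the sentinel bookkeeping handled properly. Your closing observation that dividing by $z_0$ amounts to moving the left sentinel from $0$ to $1$ also correctly resolves the paper's (slightly ambiguous) use of the undefined $s_i$ in the displayed product.
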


\begin{proof}[Proof of Theorem \ref{thm:RJMC-ss}]

By Proposition~\ref{prop:RJMC-irred}, the stationary distribution is unique. Hence, it suffices to verify that the probabilities given by \eqref{RJMC-ss} satisfy the master equation,
\be \label{mastereq}
\sum_{v \in \B mk} \P(v \to w) \pi(v) = \pi(w).
\ee

We will first suppose that the first $m-1$ positions in $w$ contain $k$ $1$'s, where $k$ is strictly less than $b$. There are two natural cases. First, suppose $w_1 = 0$. Then the only possibilities for $b$ are $(w_2,\dots,w_{n},0)$ and $(w_2,\dots,w_{n},1)$ and in both cases $\P(v \to w) = x_0$. Then the left hand side of \eqref{mastereq} is immediately equal to $\pi(w)$ using Lemma~\ref{lem:sum-differ-at-last}.
This completes the proof when 
$w_1 = 0$. 

We now consider the case when $w_1 = 1$, in which case, $n_1 = 1$. Then there are two types of transitions leading to $w$, where either a 1 is moved from the first $m-1$ sites to the front, or where a 1 is added to the left. Let us consider the former. There are two natural subcases depending on the location $\ell$ of the moving 1 in $v$.
\begin{itemize}

\item $1 \leq \ell \leq n_2-2$: For each such $\ell$, there are two possibilities for $v$ in \eqref{mastereq} depending on whether the last site is a 0 or 1. The remaining 1's in $b$ are at positions $l,n_2-1,\dots,n_k-1$. In both cases, they make a transition to $w$ with probability $x_1$ and adding these using Lemma~\ref{lem:sum-differ-at-last} gives us,
\[
x_1 \prod_{i=1}^{k} \bar z_{i-1} \; z_0^{\ell-1} z_1^{n_2-\ell-2} 
\prod_{i=2}^{k} z_i^{n_{i+1}-n_{i}-1},
\]
and summing this over the given range for $\ell$ gives
\[
\prod_{i=1}^{k} \bar z_{i-1} \; (z_1^{n_2-2} - z_0^{n_2-2}) 
\prod_{i=2}^{k} z_i^{n_{i+1}-n_{i}-1}.
\]

\item $2 \leq j \leq k$ and $n_j \leq \ell \leq n_{j+1}-2$: The transition to $w$ occurs here with probability $x_j$. As above, there are two possibilities for $v$ for each $\ell$ depending on the last site, and adding them using Lemma~\ref{lem:sum-differ-at-last}  gives
\[
x_j \prod_{i=1}^{k} \bar z_{i-1} \; z_0^{n_2-2} \prod_{i=1}^{j-2} z_i^{n_{i+2}-n_{i+1}-1}  \;
z_{j-1}^{\ell-n_j} z_{j}^{n_{j+1}-\ell-2} \;
\prod_{i=j+1}^{k} z_i^{n_{i+1}-n_{i}-1}.
\]
Summing these contributions over allowed positions of $\ell$ gives
\[
\prod_{i=1}^{k} \bar z_{i-1} \; 
 z_0^{n_2-2} \prod_{i=1}^{j-2} z_i^{n_{i+2}-n_{i+1}-1}  \;
(z_j^{n_{j+1}-n_j-1} - z_{j-1}^{n_{j+1}-n_j-1}) 
\prod_{i=j+1}^{k} z_i^{n_{i+1}-n_{i}-1}.
\]
Recall that $n_{k+1} = m+1$ and hence we have the contribution when $\ell$ is the rightmost 1 in the word.
\end{itemize}

Finally, summing these contributions over all possible values of $\ell$ telescopes leaving us with
\[
\prod_{i=1}^{k} \bar z_{i-1} \; \left(
 z_1^{n_2-2} \prod_{i=2}^{k} z_i^{n_{i+1}-n_{i}-1} -
  z_0^{n_2-2} \prod_{i=1}^{k-1} z_i^{n_{i+1}-n_{i}-1} 
 \right).
\]
The final case to consider is the one where a 1 is added to $v$, this time with probability 
$x_k + \cdots + x_b = \bar z_{k-1}$ since $v$ has $k-1$ 1's. 
Again, there are only two possibilities for $v$ depending on the last site. Adding these contributions using Lemma~\ref{lem:sum-differ-at-last} gives
\[
\bar z_{k-1} \prod_{i=1}^{k-1} \bar z_{i-1}  \; z_0^{n_2-2} \; \prod_{i=1}^{k-1} z_i^{n_{i+1}-n_{i}-1}.
\]
But adding this to the previous contribution and recalling that $n_1 = 1$ returns us exactly $\pi(w)$.

To complete the proof, we should consider the situation when $w$ has $b$ $1$'s in the first $m-1$ positions. As before, there are two subcases, depending on whether $w_1$ is $1$ or not. 
The only difference now is that Lemma~\ref{lem:sum-differ-at-last} is not applicable when $w_1 = 0$, because we cannot have more than $b$ $1$'s in $v$.
But the master equation is easily verified in this case.
The remaining calculations are similar in the other subcase to the situation when there are less than $b$ $1$'s in the first $m-1$ positions, and the proof goes through in the same way. 
\end{proof}

Theorem~\ref{thm:RJMC-ss} only shows that $\pi$ is a probability distribution up to normalisation. But it turns out that something stronger holds.

\begin{theorem}
\label{thm:RJMC-partfn}
$\pi$ is a probability distribution on $\B mb$. In other words,
\[
\sum_{a \in \B mb} \pi(a) = 
(x_0 + \cdots + x_{b})^m = 1.
\]
\end{theorem}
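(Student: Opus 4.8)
The plan is to re-sum $\pi$ first over all words with a fixed number of $1$'s, then to package the result into a generating function in an auxiliary variable $t$ and evaluate it in closed form.

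First I would group the state space by the number $k$ of $1$'s. A word with $k$ ones in positions $n_1 < \cdots < n_k$ is determined by its $k+1$ gap sizes $g_i = n_{i+1}-n_i-1 \ge 0$ (with $n_0 = 0$, $n_{k+1} = m+1$), which are nonnegative integers summing to $m-k$. Writing $T_k = \prod_{i=0}^{k-1}\bar z_i$, formula \eqref{RJMC-ss} reads $\pi(w) = T_k \prod_{i=0}^{k} z_i^{g_i}$, so summing over all words with exactly $k$ ones produces $T_k$ times the complete homogeneous symmetric polynomial $h_{m-k}(z_0,\dots,z_k) = \sum_{g_0+\cdots+g_k=m-k}\prod_i z_i^{g_i}$. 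Hence $\sum_{a\in\B mb}\pi(a) = \sum_{k=0}^{b} T_k\,h_{m-k}(z_0,\dots,z_k)$, where the range of $k$ may be taken up to $b$ since $h_d = 0$ for $d<0$.

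Next, set $\sigma = x_0 + \cdots + x_b$, so that $\bar z_i = \sigma - z_i$ and in particular $\bar z_b = 0$. I would form $F(t) = \sum_{m\ge 0} t^m \sum_{k=0}^{b} T_k\,h_{m-k}(z_0,\dots,z_k)$ and, using $\sum_{d\ge 0} h_d(z_0,\dots,z_k)\,t^d = \prod_{i=0}^{k}(1-z_i t)^{-1}$, interchange the two sums to obtain $F(t) = \sum_{k=0}^{b} T_k\,t^k \prod_{i=0}^{k}(1-z_i t)^{-1}$. The theorem is then equivalent to $F(t) = (1-\sigma t)^{-1}$, because $[t^m](1-\sigma t)^{-1} = \sigma^m = 1$. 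The core step is to evaluate this sum by telescoping. Writing $P_k = \prod_{i=0}^{k}(1-z_i t)^{-1}$, I would prove by induction on $K$ that the partial sums satisfy $\sum_{k=0}^{K} T_k t^k P_k = (1 - T_{K+1}t^{K+1}P_K)/(1-\sigma t)$. The inductive step reduces, via $P_{K-1} = (1-z_K t)P_K$ and $T_{K+1} = (\sigma - z_K)T_K$, to the single algebraic identity $(1 - z_K t) - (\sigma - z_K)t = 1 - \sigma t$; this is exactly where the relation $\bar z_K = \sigma - z_K$ does the work. Setting $K=b$ and using $\bar z_b = 0$, hence $T_{b+1} = 0$, collapses the correction term and yields $F(t) = (1-\sigma t)^{-1}$, which completes the proof.

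I expect the main obstacle to be locating the telescoping, that is, guessing the closed form of the partial sum and spotting the cancellation $(1-z_K t) - \bar z_K t = 1 - \sigma t$; once this is in hand the remaining manipulations are routine. As an alternative I considered proving the polynomial identity $\sum_w \pi(w) = \sigma^m$ directly by induction on $m$, peeling off the first letter and invoking \refL{lem:sum-differ-at-last}, but tracking that bookkeeping appears messier than the generating-function telescoping, so I would favor the approach above.
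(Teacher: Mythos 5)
Your proof is correct, but it takes a genuinely different route from the paper. The paper derives Theorem~\ref{thm:RJMC-partfn} (together with Theorem~\ref{thm:RJMC-conv}) from an enriched Markov chain on words $\W mb$ over the alphabet $\{1,\dots,b+1\}$, whose stationary distribution is the obvious product measure $\Pi(\tau)=\prod_i x_{\tau_i-1}$ summing to $(x_0+\cdots+x_b)^m=1$; the map $\phi$ lumps this chain onto the RJMC, and since lumping pushes forward the stationary measure without changing total mass, the normalisation is automatic. Your argument is instead a self-contained algebraic computation: grouping states by the number of ones to get $\sum_a\pi(a)=\sum_{k=0}^{b}T_k\,h_{m-k}(z_0,\dots,z_k)$ with $T_k=\prod_{i=0}^{k-1}\bar z_i$, passing to the generating function $\sum_k T_k t^k\prod_{i=0}^k(1-z_it)^{-1}$, and telescoping via the identity $(1-z_Kt)-\bar z_Kt=1-\sigma t$, with the boundary term killed by $\bar z_b=0$. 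I checked the base case, the inductive step, and the coefficient extraction; all are sound (note only that $\bar z_i=\sigma-z_i$ should be read through the definition $\bar z_i=x_{i+1}+\cdots+x_b$, since the paper sets $\bar z_i=1-z_i$ with $\sigma=1$). What each approach buys: yours is elementary, verifies the identity as a polynomial identity in the $x_i$'s, and does not depend on first knowing that \eqref{RJMC-ss} is stationary; the paper's enriched-chain construction requires setting up and verifying the lumping, but yields the ultrafast convergence of Theorem~\ref{thm:RJMC-conv} and Corollary~\ref{cor:RJMC-eigen} for free, which your computation does not touch.
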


It turns out that the RJMC also satisfies a property called ultrafast convergence.

\begin{theorem}
\label{thm:RJMC-conv}
The RJMC on $\B mb$ converges to its stationary distribution in at most $m$ steps.
\end{theorem}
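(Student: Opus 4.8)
The plan is to prove the statement by a \emph{grand-coupling} argument, showing that after $m$ steps the state of the chain is a deterministic function of the random inputs alone, so that every initial state has coalesced. First I would realize the chain through state-independent update rules: at each step draw a single symbol $j \in \{0,1,\dots,b\}$ with $\P(j) = x_j$, and let $f_j$ be the map sending a word $w$ to its successor, where $j=0$ triggers transition (1), $1\le j\le \ell$ triggers transition (2) on the $j$-th one, and $j > \ell$ triggers transition (3). Since $\sum_{j>\ell} x_j = x_{\ell+1}+\cdots+x_b$, this realizes the stated transition probabilities. Using the same draws $j_1,j_2,\dots$ for all initial states couples the chains while respecting each marginal law, so it suffices to prove that $g_m := f_{j_m}\circ\cdots\circ f_{j_1}$ is a \emph{constant} map: then the law of $X_m$ does not depend on $X_0$, and since $\pi$ is invariant this common law must be $\pi$, giving total variation distance $0$ at time $m$.

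The core observation, read off from the matrix model (Example~\ref{E:matrix1}), is that all three transitions act on $w=(w_1,\dots,w_m)$ by prepending a symbol and shifting right (so $w_m$ is always discarded), the only extra effect being that transition (2) additionally zeroes the shifted position of the one that was moved. Concretely, writing $e := \mathbf 1[j\ge 1]$ for the prepended symbol, the successor of $w$ has position $1$ equal to $e$, and position $p$ (for $2\le p\le m$) equal to the old $w_{p-1}$, except that when $j\ge 1$ and the $j$-th one of $w$ sits at position $p-1$, that slot is set to $0$ instead.

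The heart of the proof is the inductive claim $P(t)$: the first $t$ letters of $g_t(w)$ are determined by $(j_1,\dots,j_t)$ and do not depend on $w$. The base case $t=1$ is immediate since position $1$ equals $\mathbf 1[j_1\ge 1]$. For the inductive step I would determine position $p$ of $g_t(w)$ for $2\le p\le t$: it equals position $p-1$ of $g_{t-1}(w)$ unless it is zeroed at step $t$, and the zeroing occurs exactly when $j_t\ge 1$ and the $j_t$-th one of $g_{t-1}(w)$ lies at position $p-1$. The point---and the step I expect to be the main obstacle---is that both ``position $p-1$ of $g_{t-1}(w)$'' and ``whether the $j_t$-th one sits at $p-1$'' depend only on positions $1,\dots,p-1$ of $g_{t-1}(w)$, all of which satisfy $p-1\le t-1$ and are therefore fixed by $P(t-1)$. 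One must check carefully that the decision between transitions (2) and (3) never needs information beyond this determined prefix: if the $j_t$-th one lies in positions $1,\dots,p-1$ then it genuinely triggers the zeroing of $p$, while if it does not, no zeroing of $p$ happens regardless of whether we are in case (2) or (3), since any zeroing then occurs at a position $\ge p+1$ (or outside the window). This localizes the causal dependence and closes the induction.

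Taking $t=m$ in $P(m)$ shows that all $m$ letters of $g_m(w)$ are determined by $(j_1,\dots,j_m)$, so $g_m$ is constant and the coupling has coalesced. As noted above, this forces the law at time $m$ to equal $\pi$ from every starting state, which is exactly the claimed convergence in at most $m$ steps. I would finally double-check the boundary bookkeeping---the distinguished role of position $m$ and the ``at most $b$ ones'' constraint---to confirm the update rules above are literally transitions (1)--(3) and that no case is lost when $w$ already has $b$ ones.
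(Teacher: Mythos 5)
Your proof is correct and is essentially the paper's own argument recast in grand-coupling language: the i.i.d.\ draws $(j_1,\dots,j_m)$ are exactly the enriched chain on words $\W mb$ that the paper constructs, and your claim that $g_m$ is a constant function of these draws is precisely the assertion that the paper's recursively defined map $\phi$ lumps the enriched chain onto the RJMC. The only difference is presentational: you establish the key determinism claim $P(t)$ by an explicit induction on the prefix, whereas the paper delegates that verification to the analogous argument in \cite[Theorem~4.16]{ABCN-2015}.
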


The proof of Theorems~\ref{thm:RJMC-partfn} and \ref{thm:RJMC-conv} will follow from the construction of an enriched Markov chain. The enriched chain then lumps (projects) down onto the original chain. For a formal definition of lumping see \cite[Lemma 2.5]{LPW}.
The strategy here follows closely that of \cite[Section~4.2]{ABCN-2015}. In particular, the case of $b=m$ coincides after ``particle-hole'' symmetry with the annihilation juggling model.
The next corollary follows because Theorem~\ref{thm:RJMC-conv} proves that $M^{m+1} = M^m$.

\begin{corollary}
\label{cor:RJMC-eigen}
All eigenvalues of $M$ are as follows: the eigenvalue 1 occurs with multiplicity one and all other eigenvalues are equal to 0.
\end{corollary}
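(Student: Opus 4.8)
The plan is to read off the corollary from the ultrafast convergence of Theorem~\ref{thm:RJMC-conv}, after which only elementary linear algebra remains. First I would translate ``convergence to stationarity in at most $m$ steps'' into a statement about $M$ itself: for every initial probability distribution $\mu$ one has $\mu M^m = \pi$, and applying this to each standard basis vector (the point mass at a state) shows that every row of $M^m$ equals $\pi$. Writing $\mathbf{1}$ for the all-ones column vector and viewing $\pi$ as a row vector, this is the single matrix identity $M^m = \mathbf{1}\,\pi$. Note that $\pi\,\mathbf{1} = 1$ is automatic, since $\mu M^m$ is a probability vector whenever $M$ is stochastic.

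From here the remark $M^{m+1} = M^m$ is immediate, because $M^{m+1} = (\mathbf{1}\,\pi) M = \mathbf{1}\,(\pi M) = \mathbf{1}\,\pi = M^m$ using $\pi M = \pi$. Equivalently $M^m(M - I) = 0$, so the minimal polynomial of $M$ divides $t^m(t-1)$. This already forces every eigenvalue of $M$ to lie in $\{0,1\}$, and since the factor $(t-1)$ occurs only to the first power, the eigenvalue $1$ is semisimple (no nontrivial Jordan blocks sit over it).

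It then remains to pin down that $1$ occurs with multiplicity exactly one. The cleanest route is to observe that $M^m = \mathbf{1}\,\pi$ is a rank-one idempotent: indeed $(\mathbf{1}\,\pi)^2 = \mathbf{1}\,(\pi\,\mathbf{1})\,\pi = \mathbf{1}\,\pi$ since $\pi\,\mathbf{1} = 1$. Hence $M^m$ has the eigenvalue $1$ with algebraic multiplicity one and the eigenvalue $0$ on the rest; and since the eigenvalues of $M^m$ are the $m$-th powers of those of $M$, with the only possibilities $0$ and $1$ remaining distinct under $\lambda \mapsto \lambda^m$, the eigenvalue $1$ of $M$ likewise has multiplicity one. (Alternatively, the irreducibility and aperiodicity from Proposition~\ref{prop:RJMC-irred} allow one to invoke Perron--Frobenius for the same conclusion.) Every other eigenvalue is therefore $0$, which is the assertion. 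I do not anticipate a genuine obstacle, as all the substance already resides in Theorem~\ref{thm:RJMC-conv}; the only point demanding care is the multiplicity-one claim, where one must rule out Jordan blocks and coincidental multiplicities rather than merely locate the eigenvalues.
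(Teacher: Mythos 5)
Your proposal is correct and follows essentially the same route as the paper, which simply notes that the corollary follows from the identity $M^{m+1}=M^m$ supplied by Theorem~\ref{thm:RJMC-conv}. You merely fill in the details the paper leaves implicit — in particular the rank-one idempotent structure of $M^m=\mathbf{1}\,\pi$ pinning down that the eigenvalue $1$ has multiplicity exactly one — and those details are accurate.
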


\subsection{Enriched Chain on Words}
\label{S:enriched-single-finite}
Let $\W mb$ consist of words of length $m$ in the symbols $\{1,\dots,b+1\}$. Using the distribution on $\{0,\dots,b\}$ given by $\P(\cdot)$, we define a Markov chain on $\W mb$ by the following transitions. For $\tau = (\tau_1,\dots,\tau_m) \in \W mb$,
\[
\P(\tau \to (t,\tau_1,\dots,\tau_{m-1})) = x_{t-1} \quad \text{for $1 \leq t \leq b+1$}.
\]
It is easy to see that this chain is irreducible and aperiodic and that the stationary probability distribution $\Pi$ of this chain is given by
\[
\Pi(\tau_1,\dots,\tau_m) = \prod_{i=1}^m x_{\tau_i-1}.
\]
It is immediate that $\sum \Pi(\tau) = 1$.
Moreover, we obtain a $\Pi$-distributed word in at most $m$-steps.

Intuitively we think of $\tau_i$ as the 1 (from the left) that jumped to the front $i$ time steps ago. To formally define the lumping onto RJMC, define $S_i(w)$  for $w \in \B mb$ and $i \leq b$ to be the word obtained by the replacing the $i$'th 1 from the left in $w$ by 0 if such a 1 exists and $w$ otherwise. Then we claim that the map $\phi : \W mb \to \B mb$ defined recursively by
\[
\phi(\tau_1,\dots,\tau_m) = 
\begin{cases}
\emptyset & \text{if $m=0$}, \\
S_{\tau_1} (1,\,\phi(\tau_2,\dots,\tau_{m})) & \text{if $m \geq 1$},
\end{cases}
\]
defines the desired lumping. Using the intuitive understanding of the enriched chain this is not difficult to prove; the strategy is identical to that of \cite[Theorem~4.16]{ABCN-2015} with the words reversed. These facts prove Theorems~\ref{thm:RJMC-partfn} and \ref{thm:RJMC-conv}.

\section{Infinite Reverse Juggling Markov Chain} 
\label{S:single-infinite}
The Infinite Reverse Juggling Markov Chain (IRJMC) is the $m \to \infty$ limit of the RJMC in \refS{S:single-finite} and we will continue to use notation from there.
Let, as before, $b \in \mathbb{N}$ be the number of balls, and $\mathbb{P}(i) = x_i$ be an arbitrary probability distribution on $\{0,\dots,b\}$. Consider all semi-infinite binary words with $b$ ones and let 
$w = w_1,w_2,\dots$ be a word. Then the transitions of the IRJMC are as follows.  

\begin{enumerate}
\item With probability $x_{0}$, go to state $0,w_1,w_2,\dots$.

\item With probability $x_i$ for  $1 \leq i \leq b$, move the $i$'th one from the left to the front, replace it by a zero, and shift everything to the right.

\end{enumerate}

Let configurations be denoted by increasing $b$-tuples of integers , ${\bf n} = (n_1,\dots,n_b)$, indexing the positions of the ones. 
As in Section~\ref{S:single-finite}, let $z_i = x_0 + \cdots + x_i$ and $\bar z_i = 1- z_i =  x_{i+1} + \cdots + x_{b}$ for $0 \leq i \leq b$. Then, an equivalent description of the process is
\[
{\bf n} \to
\begin{cases}
(n_1+1,\dots,n_b+1) & \text{with probability $x_0$}, \\
(1, n_1+1,\dots,\widehat{n_i+1}, \dots, n_b+1) & \text{with probability $x_i$ for $1 \leq i \leq b$},
\end{cases}
\]
where $\hat x$ means that the element $x$ should be omitted.

\begin{prop}
\label{prop:single-pos-rec}
The IRJMC is positive recurrent if and only if $x_b > 0$.
\end{prop}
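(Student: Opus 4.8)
The plan is to settle positive recurrence by exhibiting and normalising an explicit stationary distribution when $x_b>0$, and to prove transience when $x_b=0$. Recall that for an irreducible chain positive recurrence is equivalent to the existence of a stationary \emph{probability} distribution, so the whole question reduces to whether the stationary measure is summable. The natural candidate comes from the $m\to\infty$ limit of Theorem~\ref{thm:RJMC-ss} applied to a configuration carrying the full complement of $b$ ones: since $z_b=x_0+\cdots+x_b=1$, the factor $z_b^{\,n_{b+1}-n_b-1}$ (with $n_{b+1}=m+1$) equals $1$ for every $m$, and all remaining factors are independent of $m$, leaving
\be
\pi(\mathbf n)=\prod_{i=0}^{b-1}\bar z_i\;\prod_{i=0}^{b-1}z_i^{\,n_{i+1}-n_i-1},\qquad n_0=0.
\ee

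First I would verify directly that $\pi$ satisfies the balance equations $\pi(\mathbf n)=\sum_{\mathbf m}\P(\mathbf m\to\mathbf n)\,\pi(\mathbf m)$, in two cases paralleling the proof of Theorem~\ref{thm:RJMC-ss}. If $n_1\ge 2$ the front is empty, so the unique predecessor is $\mathbf m=(n_1-1,\dots,n_b-1)$, reached by the shift of probability $x_0$; since $x_0=z_0$ and $\pi(\mathbf n)=z_0\,\pi(\mathbf m)$, balance is immediate. If $n_1=1$ the predecessors are exactly the $b$-ball states from which some ball $j$ jumps to the front (ball number is conserved here, unlike in the finite chain, so there is no ``add a one'' predecessor). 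The crucial difference from the finite setting is that ball $b$ may jump from an \emph{arbitrarily distant} position $v\ge n_b$, contributing a tail $x_b\sum_{v\ge n_b}z_{b-1}^{\,v-n_b}=x_b/(1-z_{b-1})=1$, using $1-z_{b-1}=\bar z_{b-1}=x_b$. Thus this geometric series converges precisely because $x_b>0$, and once it is summed the remaining contributions telescope exactly as in Theorem~\ref{thm:RJMC-ss}, returning $\pi(\mathbf n)$. This telescoping (together with the convergence of the ball-$b$ tail) is the one genuine calculation, but it is a transcription of the finite argument with $z_b=1$.

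Next I would check normalisability. Writing each configuration through its gaps $g_i=n_{i+1}-n_i-1\ge 0$, which range freely over the nonnegative integers, gives
\be
\sum_{\mathbf n}\pi(\mathbf n)=\prod_{i=0}^{b-1}\bar z_i\sum_{g_i\ge 0}z_i^{\,g_i}=\prod_{i=0}^{b-1}\frac{\bar z_i}{1-z_i}.
\ee
Each geometric series converges iff $z_i<1$, i.e.\ $\bar z_i>0$; since $\bar z_{b-1}=x_b$ and $\bar z_i\ge x_b$ for all $i\le b-1$, all of them converge exactly when $x_b>0$, and then, using $1-z_i=\bar z_i$, the sum equals $1$. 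Hence for $x_b>0$ the measure $\pi$ is an honest stationary probability distribution and the chain is positive recurrent.

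For the converse, suppose $x_b=0$. Then the rightmost ball can never jump to the front: under the shift (probability $x_0$) and under every jump of a ball $j<b$, the position $n_b+1$ survives and remains largest, so $n_b$ increases by exactly $1$ at each step and $n_b(t)=n_b(0)+t\to\infty$ deterministically. The chain therefore leaves every finite set forever and is transient, hence not positive recurrent; consistently, the formula degenerates, $\bar z_{b-1}=0$ forcing $\pi\equiv 0$. Combining the two directions proves the proposition. I expect the main obstacle to be the $n_1=1$ balance equation, where one must justify the convergence of the ball-$b$ tail and carry out the telescoping; I also note one technical point, that passing from ``stationary probability distribution'' to ``positive recurrent'' requires irreducibility, which holds once $x_0,x_b>0$ and in general is arranged by restricting to the communicating class on which $\pi$ is supported.
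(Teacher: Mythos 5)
Your proof is correct, but it takes a genuinely different route from the paper's. The paper argues directly and probabilistically: from any configuration one can reach $(1,\dots,b)$ in $b$ steps by moving the balls to the front from right to left, giving a return probability to $(1,\dots,b)$ at every time $m\ge b$ that is bounded below by the fixed positive constant $x_b(x_{b-1}+x_b)\cdots(x_1+\cdots+x_b)$; this uniform lower bound forces a finite expected return time (and a fortiori divergence of $\sum_m M^m_{(1,\dots,b),(1,\dots,b)}$), hence positive recurrence, while the $x_b=0$ case is handled exactly as you do, by noting that $n_b$ increases deterministically. You instead invoke the equivalence ``irreducible $+$ stationary probability distribution $\Leftrightarrow$ positive recurrent'' and exhibit the explicit invariant measure, which requires you to verify the master equation (including the convergent geometric tail for ball $b$) and the normalisation $\sum_{\mathbf n}\pi(\mathbf n)=\prod_{i=0}^{b-1}\bar z_i/(1-z_i)=1$. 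This is logically sound and self-contained, but it front-loads essentially all of the content of Theorems~\ref{thm:inf-RJMC-stat-dist} and~\ref{thm:inf-RJMC-part-fn} into this proposition; in the paper those results come \emph{after} and \emph{use} Proposition~\ref{prop:single-pos-rec} (positive recurrence plus aperiodicity gives uniqueness, so only the master equation need be checked there). Your ordering avoids any appeal to uniqueness but costs you the full balance-equation computation up front; the paper's ordering keeps the proposition short at the price of a slightly informal step (divergence of the Green's function per se only gives recurrence --- it is the uniform positive lower bound on the $m$-step return probabilities that yields positivity). Your closing caveat about irreducibility is well taken and is in fact glossed over in the paper, which never states irreducibility of the IRJMC explicitly; restricting to the communicating class supporting $\pi$, as you suggest, is the right fix when $x_0=0$.
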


\begin{proof}
When $x_b > 0$, it suffices to show that there are exactly $b!$ ways to get from an arbitrary configuration $(n_1,\dots,n_b)$ to $(1,\dots,b)$ in $b$ steps, with total probability given by
\be \label{return-prob}
x_b (x_{b-1} + x_b) \cdots (x_1 + \cdots + x_b).
\ee
But this is clear, since at the first step, any of the $b$ balls can be moved to the first site with total probability $x_1 + \cdots + x_b$, following which any of the last $b-1$ balls can be moved to the first site with total probability $x_2 + \cdots + x_b$, and so on. 

Therefore, for any $m \geq b$, the probability of starting from and returning to $(1,\dots,b)$ in $m$ steps is given by \eqref{return-prob}. Therefore, if we let $M$ denote the transition matrix, we get that 
\[
\sum_{m=b}^\infty M^m_{(1,\dots,b),(1,\dots,b)},
\]
diverges, which implies that the chain is positive recurrent. On the other hand, if $x_b = 0$, then the last ball is at position at least $m$ after $m$ steps for all $m$. Therefore, all states are transient.
\end{proof}

\begin{theorem}
\label{thm:inf-RJMC-stat-dist}
Let ${\bf n} = (n_1,\dots,n_b)$ denote a configuration. Then
the stationary probability distribution $\pi$ of the IRJMC is given by 
\be \label{inf-single-stat-dist}
\pi({\bf n})= \frac{1}{Z_b}\prod_{i=0}^{b-1}  z_i^{n_{i+1}-n_{i}-1}.
\ee
where we set $n_0 = 0$ and $Z_b$ is the partition function.
\end{theorem}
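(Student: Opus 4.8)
The plan is to exploit Proposition~\ref{prop:single-pos-rec}: assuming $x_b>0$ the chain is positive recurrent and irreducible, hence it has a unique stationary distribution, so it suffices to check that the proposed $\pi$ is normalisable and satisfies the balance equations $\pi({\bf n}) = \sum_{\bf m} \P({\bf m} \to {\bf n})\,\pi({\bf m})$. Because the normalisation $1/Z_b$ cancels from both sides of the balance equation, I would verify the balance equations for the unnormalised weight $\hat\pi({\bf n}) := \prod_{i=0}^{b-1} z_i^{n_{i+1}-n_i-1}$, and treat normalisability separately.

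Normalisability is the easy part. Writing a configuration through its gaps $g_i := n_{i+1}-n_i-1 \ge 0$ for $0 \le i \le b-1$ (with $n_0=0$), the monomial in \eqref{inf-single-stat-dist} factors as $\prod_{i=0}^{b-1} z_i^{g_i}$ with the $g_i$ ranging independently over $\{0,1,2,\dots\}$. Hence $Z_b = \prod_{i=0}^{b-1}(1-z_i)^{-1} = \prod_{i=0}^{b-1}\bar z_i^{-1}$, which is finite exactly when every $\bar z_i>0$, i.e. when $x_b>0$. This matches the recurrence threshold and is consistent with the constant $\prod_{i=0}^{b-1}\bar z_i$ appearing in the finite formula of Theorem~\ref{thm:RJMC-ss}.

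For the balance equations there are two cases, according to whether the front site of $\bf n$ is empty. If $n_1 \ge 2$, the only predecessor is the configuration $(n_1-1,\dots,n_b-1)$, reached by prepending a $0$ with probability $x_0=z_0$; a one-line comparison of the gap exponents gives $\hat\pi({\bf n}) = z_0\,\hat\pi(n_1-1,\dots,n_b-1)$, as required. If $n_1=1$, the predecessors are exactly the configurations obtained by undoing a ``move to the front'' step: the ball now at the front came from some position $p\ge 1$, while the balls at $n_2,\dots,n_b$ previously sat at $n_2-1,\dots,n_b-1$. Grouping these predecessors by the rank $j$ of $p$ among the occupied sites of $\bf m$ (equivalently by which gap of $\bf n$ contains $p$), the transition probability is $x_j$ and the inner sum over the admissible $p$ is geometric. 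Using $z_j-z_{j-1}=x_j$, each such sum collapses to a difference $z_j^{\,n_{j+1}-n_j-1}-z_{j-1}^{\,n_{j+1}-n_j-1}$, precisely as in the $w_1=1$ analysis of Theorem~\ref{thm:RJMC-ss}; summing these differences over $1 \le j \le b$ telescopes, leaving only the single surviving monomial $\hat\pi({\bf n})$.

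The step I expect to require the most care is the contribution of the rank-$b$ predecessors, where $p$ ranges over the infinite tail $p \ge n_b$. Here the geometric sum becomes $\sum_{p \ge n_b} z_{b-1}^{\,p-n_b}$, which converges to $(1-z_{b-1})^{-1}=x_b^{-1}$ precisely because $x_b>0$, so that after multiplying by $x_b$ it supplies the clean closing term of the telescoping (using $z_b=1$). This is the one place where the recurrence hypothesis is genuinely needed, and it is what distinguishes the infinite chain from the finite RJMC, whose boundary term $z_b^{\,m-n_b}$ played the analogous role. An alternative route would be to derive \eqref{inf-single-stat-dist} as the $m\to\infty$ limit of the finite stationary distribution of Theorem~\ref{thm:RJMC-ss} conditioned on configurations with exactly $b$ ones; but controlling the boundary effect that a one may fall off the right end of the length-$m$ window makes this limiting argument more delicate than the direct verification above, so I would present the direct computation as the main proof.
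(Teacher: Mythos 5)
Your proposal is correct and follows essentially the same route as the paper's proof: uniqueness via Proposition~\ref{prop:single-pos-rec}, then verification of the master equation by splitting into the cases $n_1\ge 2$ and $n_1=1$, grouping predecessors by the rank of the jumping ball, summing geometric series (with the infinite tail for rank $b$ converging because $x_b>0$), and telescoping. The only addition is your direct computation of $Z_b=\prod_{i=0}^{b-1}\bar z_i^{-1}$ via independent gap variables, which the paper states separately as Theorem~\ref{thm:inf-RJMC-part-fn} and proves by an equivalent one-gap-at-a-time induction.
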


\begin{proof}
Since the chain is positive recurrent by Proposition~\ref{prop:single-pos-rec} and aperiodic (there is a nonzero return probability to the state $(1,\dots,b)$), we have a unique stationary distribution. Therefore, it suffices to verify that $\pi(n_1,\dots,n_b)$ given by \eqref{inf-single-stat-dist} satisfies the master equation \eqref{mastereq}. The proof is very similar to that of Theorem~\ref{thm:RJMC-ss} in Section~\ref{S:single-finite}, and consequently, we will be sketchy.

If $n_1 > 1$, there is a single transition to ${\bf n}$ from 
$(n_1,\dots,n_b)$  with probability $x_0$, and it is easy to verify the master equation. If $n_1 = 1$, we combine the configurations which make a transition to ${\bf n}$ into $b$ different groups. For each $j \in [b-1]$ and each $\ell \in [n_j,n_{j+1}-2]$, we get a transition from $(n_2-1,\dots,n_j-1,\ell,n_{n+1}-1,\dots,n_b-1)$ with probability $x_j$. Adding these contributions to the master equation  for a fixed $j$, we obtain a total contribution of 
\[
\frac{1}{Z_b}\prod_{i=0}^{j-2} z_i^{n_{i+2}-n_{i+1}-1} \;
\left( z_j^{n_{j+1} - n_j - 1} - z_{j-1}^{n_{j+1} - n_j - 1} \right) 
\prod_{i=j+1}^{b-1} z_i^{n_{i+1}-n_{i}-1}.
\]
Here we have used the fact that $x_j=z_j-z_{j-1}$ and $(z_j-z_{j-1})\sum_{i=0}^r z_j^{r-i}z_{j-1}^i=z_j^{r+1} - z_{j-1}^{r+1}$.
We now add these terms for $1 \leq j \leq b-1$ using a telescoping argument to
obtain
\[
\frac{1}{Z_b}\prod_{i=1}^{b-1} z_i^{n_{i+1}-n_{i}-1} 
-
\frac{1}{Z_b}\prod_{i=0}^{b-2} z_i^{n_{i+2}-n_{i+1}-1}.
\]
The last group of configurations are the ones given by $(n_2-1,\dots,n_b-1,\ell)$ where $\ell \in [n_b,\infty)$, all of which make a transition with probability $x_b$. Adding this infinite contribution gives
\[
\frac{1}{Z_b}\prod_{i=0}^{b-2} z_i^{n_{i+2}-n_{i+1}-1},
\]
which, when added to the previous sum, returns $\pi({\bf n})$ and completes the proof.
\end{proof}

\begin{theorem}
\label{thm:inf-RJMC-part-fn}
The partition function of the chain is given by
\[
Z_b = \prod_{i=0}^{b-1} \frac{1}{ \bar z_i}.
\]
\end{theorem}

\begin{proof}
This is easily proved by induction on $b$. The case $b=1$ can be verified.
For fixed $(n_1,\dots,n_{b-1})$, the sum of $\pi(n_1,\dots,n_{b})$ over $n_b$ is a geometric progression, whose sum gives $1/ \bar z_{b-1}$.
\end{proof}

We obtain Knutson's result immediately as a special case of Theorems \ref{thm:inf-RJMC-stat-dist} and \ref{thm:inf-RJMC-part-fn}.

\begin{corollary}[{\cite[Theorem 1]{knutson-2018}}]
If the jump probabilities $x_i$ in the IRJMC are chosen to be
\begin{equation}
\label{special probs}
x_i =
\begin{cases}
q^{-b} & \text{if $i=0$}, \\
q^{-b+i}(1-q^{-1}) & \text{if $i>0$},
\end{cases}
\end{equation}
and we define $\ell(w)$ as the number of pairs $(i,j)$ where $i<j$ and $w_i = 0, w_j = 1$ , then
\[
\pi(w) = \frac{1}{q^{\ell(w)}} \prod_{i=1}^b \left( 1 - \frac{1}{q^i} \right).
\]
\end{corollary}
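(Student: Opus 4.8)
The plan is to specialize the general formula from Theorems~\ref{thm:inf-RJMC-stat-dist} and \ref{thm:inf-RJMC-part-fn} to the probabilities in \eqref{special probs} and check that everything collapses into the claimed closed form. First I would compute the relevant partial sums $z_i = x_0 + \cdots + x_i$ under the given choice. With $x_0 = q^{-b}$ and $x_i = q^{-b+i}(1-q^{-1})$ for $i > 0$, the sum telescopes: since $q^{-b+i}(1-q^{-1}) = q^{-b+i} - q^{-b+i-1}$, adding $x_1,\dots,x_i$ to $x_0 = q^{-b}$ should give $z_i = q^{-b+i}$. Consequently $\bar z_i = 1 - z_i = 1 - q^{i-b}$, and reindexing, $\prod_{i=0}^{b-1} \bar z_i = \prod_{i=0}^{b-1}(1 - q^{i-b}) = \prod_{j=1}^{b}(1 - q^{-j})$, which is exactly the product appearing in the statement (and equals $1/Z_b$ by Theorem~\ref{thm:inf-RJMC-part-fn}).

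Next I would handle the power-of-$q$ factor. Substituting $z_i = q^{i-b}$ into \eqref{inf-single-stat-dist} gives
\[
\prod_{i=0}^{b-1} z_i^{n_{i+1}-n_i-1} = \prod_{i=0}^{b-1} q^{(i-b)(n_{i+1}-n_i-1)} = q^{-\sum_{i=0}^{b-1}(b-i)(n_{i+1}-n_i-1)},
\]
with $n_0 = 0$. So the whole point reduces to showing that the exponent equals the inversion statistic, i.e.
\[
\ell(w) = \sum_{i=0}^{b-1}(b-i)(n_{i+1}-n_i-1).
\]
The key step, which I expect to be the only real content of the proof, is this combinatorial identity. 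I would argue it directly from the definition of $\ell(w)$ as the number of pairs $(r,s)$ with $r<s$, $w_r = 0$, $w_s = 1$. Each $1$ sits at some position $n_{i+1}$, and the number of $0$'s strictly to its left is $n_{i+1} - (i+1)$ (there are $i$ ones and hence $n_{i+1}-1-i$ zeros before it). Summing over all $b$ ones gives $\ell(w) = \sum_{i=0}^{b-1}(n_{i+1} - i - 1)$. I would then verify this equals the grouped-by-gaps expression above: the segment of $0$'s lying in the gap between the $i$-th and $(i+1)$-th one has length $n_{i+1}-n_i-1$ and each such $0$ contributes an inversion with every one of the $b-i$ ones to its right, giving the weight $b-i$, which matches.

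The main obstacle is purely bookkeeping: being careful with the indexing conventions ($n_0 = 0$, the off-by-one in counting zeros before a given one, and the correspondence between the two ways of summing the inversion statistic). There is no structural difficulty since positive recurrence and uniqueness of the stationary distribution are already guaranteed by Proposition~\ref{prop:single-pos-rec} for $x_b > 0$, and here $x_b = 1 - q^{-1} > 0$, so the specialized formula is automatically the stationary distribution. I would close by combining the two computations: $\pi(w) = \frac{1}{Z_b}\, q^{-\ell(w)} = q^{-\ell(w)} \prod_{j=1}^{b}(1 - q^{-j})$, which is the asserted expression.
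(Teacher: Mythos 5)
Your proposal is correct and follows exactly the route the paper intends: the corollary is stated as an immediate specialization of Theorems~\ref{thm:inf-RJMC-stat-dist} and \ref{thm:inf-RJMC-part-fn} with no written details, and your computations --- the telescoping $z_i = q^{i-b}$, the product $\prod_{j=1}^{b}(1-q^{-j}) = 1/Z_b$, and the identity $\ell(w)=\sum_{i=0}^{b-1}(b-i)(n_{i+1}-n_i-1)$ obtained by grouping the zeros by gaps --- supply precisely the omitted verification. All steps check out.
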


\begin{remark} Theorem \ref{thm:RJMC-ss} can also be deduced from Theorems \ref{thm:inf-RJMC-stat-dist} and \ref{thm:inf-RJMC-part-fn}. In fact, the RJMC of Section \ref{S:single-finite} is equivalent to studying the first $m$ positions of the IRJMC. 
\end{remark}

\begin{remark}
\label{rem:inf ball}
It is natural to ask if there is a variant of the IRJMC with infinite number of balls. We claim that such a chain will never be recurrent.
Assume, for contradiction, that a configuration with infinitely many balls recurs after $T$ jumps. Let $b$ be the largest label of a ball that jumped during these $T$ jumps. But then all balls numbered $b+1$ and higher will have moved to higher positions. Thus the new configuration cannot be the same as the one we started with. This argument also suggests that even if finitely many balls jump at each stage, the chain will not be recurrent. We would need infinite sets of balls to jump at some transitions.
\end{remark}

\section{Multispecies Reverse Juggling Markov Chain} 
\label{S:multi-finite} 
In this section, we study the finite Multispecies Reverse Juggling Markov Chain (MRJMC), for which we obtain a simple formula for the stationary distribution. The results obtained here will be useful in the study of the IMRJMC studied in Section~\ref{S:multi-infinite}. We note that the MRJMC is not a generalization of RJMC studied in Section~\ref{S:single-finite}. 
Assume we have $b$ balls with labels from a multiset $\mathcal M$ with $b_i$ elements $i$ for $1\le i\le T$, with $|\mathcal M|=b_1+\dots +b_T=b$. 
Let $S(\mathcal M)$ be the set of multipermutations of $\mathcal M$.
The MRJMC has as states multipermutations $\tau=(\tau_1,\dots,\tau_b) \in S(\mathcal{M})$ and is defined using two probability distributions; the {\em jump probabilities}  $s_1,\dots, s_b$ with $\sum_i s_i=1$ and $s_b>0$ and the {\em non-bump probabilities} $\alpha_1,\dots ,\alpha_{b-b_T}$.

Transitions in the MRJMC from $\tau$ are as follows.
With starting probability $s_j$ the ball in position $j$ starts a bumping path 
to the left. Assume there are $\ell$ and $r$ balls with smaller labels to the left and right of $\tau_{j}$ respectively.
Assume further that $1\le i_1<\dots<i_\ell<j<i_{\ell+1}\dots <i_{\ell+r}$ are numbers such that the balls with labels smaller than $\tau_j$ are positioned in ${i_1}<\dots <{i_{\ell+r}}$.

The ball $\tau_j$ now bumps the ball at position $i_k$, $1\le k\le \ell$ with probability $(1-\al_{r+\ell-k+1})\prod_{s=1}^{\ell-k}\al_{r+s}$ or is moved first in the permutation, position zero, without bumping any ball with probability $\prod_{s=1}^{\ell}\al_{r+s}$. Intuitively think of the ball moving left and bumping the first smaller ball with probability $1-\al_{r+1}$, if it does not it will bump the second one with probability $\al_{r+1}(1-\al_{r+2})$ etc. If a ball is bumped at position ${i_k}$, then repeat this step with $j=i_k$ to create a bumping path. The bumping path always ends with a ball placed at postition zero.
Then all balls in positions from zero to $j-1$ are moved one step right and we obtain a new permutation of $\mathcal M$.
In examples, we will often suppress the parentheses and write a state 
in one-line permutation form rather than vector form, i.e. $321321$ rather 
than $(3,2,1,3,2,1)$.

For $\tau\in S(\mathcal M)$ we define an inversion to be a pair $i<j$ such that $\tau_i>\tau_j$. 
Let $\inv(\tau)$ be the number of inversions of $\tau$.
Also let the
{\em code} (or Lehmer code) of $\tau$ be $\mathbf{c(\tau)} \equiv \mathbf c=(c_1,\dots,c_b)$, where $c_i:=\#\{k: i< k, \tau_i>\tau_k\}$; see \cite[page 30]{stanley-ec1}.
Let ${\bf \al}^{\bf c}(\tau)=\prod_{i=1}^b \al_{1}\al_{2}\dots \al_{c_i}$. 
One way to interpret the code of a multipermutation is that $c_i$ is the number of positions $j$, such that $i<j$ in $\tau$. 
Thus, if we specialise by setting all $\al_i=\al$ for all $i$, we just get ${\bf \al}^{\bf c}(\tau)=\al^{\inv(\tau)}$. 

\begin{example} 
${\bf \al}^{\bf c}(321321)=\al_1^4\al_2^3\al_3\al_4$. As an example of  a transition rate we give the following
$\P(1 2 1 3 2 1 3 1 \to 1 2 2 1 3 3 1 1)=x_7\al_2(1-\al_3)\al_3(1-\al_4)$, where the bumps happen in positions 
$7, 5$ and $1$. 
\end{example}

We illustrate the MRJMC with the following example.

\begin{example}
Consider the case of $T=3$ and ${\bf b} =(1,1,1)$. 
The transition matrix in the lexicographically ordered basis,
$\{123,132,213,231,312,321\}$, is given by
\begin{align*}
&s_3 \left(
\begin{array}{cccccc}
 \left(1-\alpha _1\right){}^2 & \alpha _1 \left(1-\alpha _2\right) & \alpha _1 \left(1-\alpha _1\right) & 0 & \alpha _1 \alpha _2 & 0 \\
 1-\alpha _1 & 0 & \alpha _1 & 0 & 0 & 0 \\
 1-\alpha _1 & 0 & 0 & \alpha _1 \left(1-\alpha _2\right) & 0 & \alpha _1 \alpha _2 \\
 1 & 0 & 0 & 0 & 0 & 0 \\
 0 & 1-\alpha _1 & 0 & \alpha _1 & 0 & 0 \\
 0 & 1 & 0 & 0 & 0 & 0 \\
\end{array}
\right) \\
+ & s_2 \left(
\begin{array}{cccccc}
 1-\alpha _1 & 0 & \alpha _1 & 0 & 0 & 0 \\
 0 & 1-\alpha _2 & 0 & 0 & \alpha _2 & 0 \\
 1 & 0 & 0 & 0 & 0 & 0 \\
 0 & 0 & 0 & 1-\alpha _2 & 0 & \alpha _2 \\
 0 & 1 & 0 & 0 & 0 & 0 \\
 0 & 0 & 0 & 1 & 0 & 0 \\
\end{array}
\right)
+ s_1
\left(
\begin{array}{cccccc}
 1 & 0 & 0 & 0 & 0 & 0 \\
 0 & 1 & 0 & 0 & 0 & 0 \\
 0 & 0 & 1 & 0 & 0 & 0 \\
 0 & 0 & 0 & 1 & 0 & 0 \\
 0 & 0 & 0 & 0 & 1 & 0 \\
 0 & 0 & 0 & 0 & 0 & 1 \\
\end{array}
\right),
\end{align*}
where, for the sake of readability, we have separately noted the transition matrix for each jump probability.
The stationary distribution is given by
\[
\frac{1}{\left(1+\alpha _1\right) \left(1+\alpha _1+\alpha _2 \alpha _1\right)}
\left(1, \alpha_1, \alpha_1, \alpha_1^2, \alpha_1 \alpha_2, \alpha_1^2 \alpha_2
\right).
\]
\end{example}

\begin{remark} \label{R:MSJMC}
Setting $s_b=1$ and $s_i=0$ for all other $i$ in the MRJMC gives a Markov chain on the same graph as the MSJMC  studied in \cite{ABCLN} but with different transitions.
\end{remark}

\begin{prop}
\label{prop:multi-finite-irred}
The MRJMC is irreducible and aperiodic if $s_b>0$ and $0 < \alpha_i < 1$ for all $i$.
\end{prop}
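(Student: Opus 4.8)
The plan is to verify the two properties separately, as they are logically distinct. \emph{Aperiodicity} is the easier of the two: I would exhibit a single state with a self-loop of positive probability. The natural candidate is the identity multipermutation $\tau = (1,\dots,1,2,\dots,2,\dots,T,\dots,T)$, in which every block of equal labels is sorted in weakly increasing order and all labels are non-decreasing left to right. For this $\tau$, whichever ball is selected (with probability $s_j$) to start a bumping path, there are no smaller labels to its left, so $\ell = 0$; the ball simply travels to position zero with probability $\prod_{s=1}^{0}\alpha_{r+s} = 1$ (empty product), and after the shift the configuration returns to $\tau$ up to relabeling within equal-label blocks — in fact it returns exactly to $\tau$ because moving the selected ball to the front and shifting reproduces the same sorted word. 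Hence $\P(\tau\to\tau) = \sum_j s_j = 1 > 0$, giving a period of $1$, so the chain is aperiodic.

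For \emph{irreducibility}, I would show that from any state one can reach a fixed reference state, and conversely that the reference state can reach every state; composing these gives strong connectivity of the transition graph. I would take the sorted identity $\tau_{\mathrm{id}}$ above as the hub. The strategy is to argue that repeatedly applying jumps that move balls to the front can sort any multipermutation: since $s_b > 0$, in particular every position can eventually be selected, and by choosing non-bumping moves (which occur with positive probability because $\alpha_i > 1$... here I use $0<\alpha_i<1$ so that both bumping and non-bumping transitions have strictly positive probability) I can transport any chosen ball to the front without disturbing the relative order of smaller balls. A clean way to organize this is to note that the quantity $\inv(\tau)$ can be strictly decreased by a suitable front-move whenever $\tau \neq \tau_{\mathrm{id}}$, so iterating reaches $\tau_{\mathrm{id}}$ in finitely many steps; and by reversing the roles one shows $\tau_{\mathrm{id}}$ reaches every state, for instance by building up any target word from its sorted form through bumping paths whose individual steps all have positive probability under the hypothesis $0<\alpha_i<1$.

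The main obstacle I anticipate is the bookkeeping in the irreducibility argument: one must carefully track how a single bumping path transforms the whole multipermutation, and verify that the desired intermediate transitions really do occur with strictly positive probability rather than merely being combinatorially conceivable. The hypotheses $s_b > 0$ and $0 < \alpha_i < 1$ are exactly what prevent degeneracies — $s_b>0$ guarantees the rightmost ball (and, by shifting, eventually every position) can initiate a path, while $0 < \alpha_i < 1$ guarantees that at each step of a bumping path both the ``bump here'' and the ``move on'' branches carry positive weight, so that every combinatorially possible bumping path has positive probability. I expect the write-up to lean on these two facts and to present the reachability of $\tau_{\mathrm{id}}$ via a monovariant such as $\inv$ (or total displacement), which sidesteps an explicit move-by-move construction. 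Once strong connectivity and the single self-loop are established, irreducibility and aperiodicity follow immediately.
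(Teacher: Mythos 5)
Your overall plan (exhibit a self-loop for aperiodicity; prove two-way reachability through a hub state for irreducibility) is legitimate and genuinely different from the paper, which disposes of the proposition in three lines by observing (Remark~\ref{R:MSJMC}) that when $s_b>0$ the transition graph contains the transition graph of the MSJMC of \cite{ABCLN} as a subgraph, and importing irreducibility and aperiodicity from there. But as written your argument has a real error and a real gap. The error is in the aperiodicity step: for the weakly increasing word $\tau_{\mathrm{id}}=(1,\dots,1,2,\dots,2,\dots,T,\dots,T)$, the balls with labels smaller than $\tau_j$ lie entirely to the \emph{left} of position $j$, so $\ell$ is as large as possible, not $0$; and even when the selected ball reaches position zero without bumping, the resulting word is $(\tau_j,\tau_1,\dots,\tau_{j-1},\tau_{j+1},\dots,\tau_b)$, which equals $\tau_{\mathrm{id}}$ only if $\tau_j=1$. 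Your conclusion $\P(\tau_{\mathrm{id}}\to\tau_{\mathrm{id}})=\sum_j s_j=1$ would make $\tau_{\mathrm{id}}$ absorbing, which is impossible for $T\ge 2$ in an irreducible chain with more than one state. A correct self-loop at $\tau_{\mathrm{id}}$ does exist, but via the opposite mechanism: start the path at position $b$ and let each ball bump the nearest strictly smaller ball to its left; after the final shift the word is restored, and this path has probability $s_b(1-\alpha_1)^{T-1}>0$. Note that this is exactly where the hypothesis $\alpha_i<1$ enters, which your version never actually uses for aperiodicity.

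The gap is in irreducibility. Since only $s_b>0$ is assumed, the only position guaranteed to be selectable is $b$, so ``every position can eventually be selected'' must be replaced by an argument that cycling (the no-bump move from position $b$, which has probability $s_b\prod\alpha_{\cdot}>0$ and acts as the cyclic shift) first brings any desired ball to position $b$; and the claimed monovariant --- that some move from position $b$ strictly decreases $\inv$ whenever $\tau\neq\tau_{\mathrm{id}}$ --- is plausible but never verified. More importantly, the reverse direction, that $\tau_{\mathrm{id}}$ reaches every state, is asserted in a single clause and not argued; for a finite chain this does not follow from the forward direction, and it is precisely the part where the bookkeeping you anticipate becomes unavoidable. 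So the proposal is a reasonable outline but not a proof; to make it self-contained you must supply both halves, or else follow the paper and reduce to the known ergodicity of the MSJMC graph.
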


\begin{proof}
If $s_b = 1$ and all the $\alpha_i$'s belong to $(0,1)$, the underlying graph of the MRJMC is ergodic, since it is the same as that for the MSJMC by Remark \ref{R:MSJMC}. The MSJMC was shown to be irreducible and aperiodic in  \cite{ABCLN}. For more general jump probabilities, the graph has extra edges, and thus the chain continues to be irreducible and aperiodic.
\end{proof}

\begin{theorem} \label{T:multi-finite}
The stationary distribution $\pi$ of the MRJMC is given by
\[
\pi(\tau) = \frac{1}{Z_{\bf b}(\alpha_1,\dots,\alpha_{b-b_T})} {\bf \alpha^c}(\tau). 
\]
\end{theorem}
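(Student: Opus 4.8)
The plan is to prove Theorem~\ref{T:multi-finite} by verifying that the claimed distribution $\pi(\tau)=Z_{\bf b}^{-1}\,{\bf \alpha^c}(\tau)$ satisfies the master equation, exactly as was done for the single-species chains. By Proposition~\ref{prop:multi-finite-irred} the stationary distribution is unique, so it suffices to check that $\sum_{\sigma} \P(\sigma\to\tau)\,\pi(\sigma)=\pi(\tau)$ for each target multipermutation $\tau$. The crucial structural fact I would exploit is that, by construction, every transition of the MRJMC moves some ball to position zero and shifts positions $0,\dots,j-1$ one step to the right; hence any $\sigma$ that can reach $\tau$ must agree with the ``tail'' of $\tau$ after the appropriate point, and $\tau_1$ is the label of the ball that just finished its bumping path at the front. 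So the first reduction is to organize the preimages $\sigma$ of $\tau$ according to which jump probability $s_j$ was used and which ball ended up being pushed to the front.

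Next I would set up the telescoping. Fix $\tau$ and let $a=\tau_1$ be its leading label. I expect the preimages to group by the position $j$ at which the jump originated together with the label of the ball that occupied position $j$ in $\sigma$; for each such group the bumping path is determined, and the associated transition probability is a product of the $(1-\alpha_\bullet)$ and $\alpha_\bullet$ factors described before the theorem. The key is that the code ${\bf c}(\sigma)$ of a preimage differs from ${\bf c}(\tau)$ in a controlled, local way along the bumping path, so that the ratio ${\bf \alpha^c}(\sigma)/{\bf \alpha^c}(\tau)$ is again a monomial in the $\alpha_i$. Summing the weighted contributions over each group should produce differences of the form $z_j^{(\cdot)}-z_{j-1}^{(\cdot)}$ analogues — here partial products of $\alpha_i$'s — using the identity $(1-\alpha_{r+\ell-k+1})\prod_{s=1}^{\ell-k}\alpha_{r+s}$ telescoping against the neighboring group. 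After summing over all $j$ and all intermediate bump positions, the cross terms cancel in a telescoping cascade, leaving precisely ${\bf \alpha^c}(\tau)$ times $\sum_i s_i=1$.

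I anticipate the main obstacle is bookkeeping the bumping paths correctly: a single transition may involve a long chain of bumps, so the preimage $\sigma$ can differ from $\tau$ in many positions, and I must verify that the product of bumping factors collected along the path matches exactly the change in the code monomial. The cleanest way to control this is to argue by induction on the length of the bumping path (equivalently on the number of balls with smaller label passed over), peeling off one bump at a time and reducing to a shorter path, much as the single-species proof reduced to Lemma~\ref{lem:sum-differ-at-last}. This suggests that the right technical tool is a multispecies analogue of that lemma, expressing the sum of $\pi$ over the two choices at the last site (and more generally over the branching $\alpha$ versus $1-\alpha$ decision at each bump) in closed product form; establishing that lemma first, and then feeding it into the telescoping sum over $j$, is where the real work lies. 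The normalization $Z_{\bf b}(\alpha_1,\dots,\alpha_{b-b_T})$ plays no role in the master-equation check since it cancels on both sides, so I would defer any discussion of its explicit form and just confirm $\pi$ is stationary up to this constant.
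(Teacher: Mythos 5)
Your plan is correct and follows essentially the same route as the paper: uniqueness via Proposition~\ref{prop:multi-finite-irred}, verification of the master equation by grouping preimages according to the starting position $t$ of the bumping path (the paper's \refL{L:Refine-t}), and a telescoping, inductive analysis of the bumping paths that leaves $\sum_t s_t\,{\bf\alpha^c}(\tau)={\bf\alpha^c}(\tau)$. The ``multispecies analogue of Lemma~\ref{lem:sum-differ-at-last}'' you anticipate is realized in the paper as \refL{L:Refine-rt}, which refines further by the position $r$ of the last jump to the front and is proved jointly with \refL{L:Refine-t} by induction on the number of balls, cutting the state at position $r$ and reducing to a shorter chain on the suffix --- your ``peel off one bump at a time'' idea made precise.
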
 

\begin{remark}
The numerator of $\pi(\tau)$ refines the inversion number. In other words, if we set $\alpha_i = \alpha$ for each $i$, $\pi(\tau)$ will be proportional to $\alpha^{\inv(\tau)}$.
\end{remark}

We will prove Theorem~\ref{T:multi-finite} by verifying the master equation for a given state $\tau$ by using two refinements. First, let $T_t(\tau)$ be the set of all states such that there is a transition to $\tau$ with a bumping path starting in position $t$. Second, let 
$T_{r,t}(\tau)$ be the set of all states $\tau'$ such that there is a transition to $\tau$ with a bumping path starting in position 
$t$ and the last jump to position zero was from position $r$. 
We now claim that the following two lemmas hold.

\begin{lemma} 
\label{L:Refine-t} 
Summing over all incoming transitions to $\tau$ with a bumping path starting from position $t$ contribute $s_t{\bf \alpha^c}(\tau)$ to the master equation, that is
\[ \sum_{\tau' \in T_t(\tau)} {\bf \alpha^c}(\tau') \P (\tau' \to \tau)= s_t {\bf \alpha^c}(\tau).
\]
\end{lemma}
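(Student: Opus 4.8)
The plan is to fix the target $\tau$ and the starting position $t$, and to sum the incoming weight over all predecessors $\tau'$ whose bumping path begins at position $t$. First I would record two structural facts about such a transition. Since every bump moves a ball strictly to the left and the final ball is deposited at position zero before the block $0,\dots,t-1$ is shifted one step right, the transition only rearranges the first $t$ entries: we have $\tau'_i=\tau_i$ for $i>t$, and the ball that reaches position zero is exactly $\tau_1$. Moreover the labels visited along a bumping path strictly decrease (each bump replaces a strictly smaller ball), so after the shift the visited balls occupy an increasing subsequence of $\tau_1,\dots,\tau_t$ beginning at position $1$. Reversing the moves then recovers $\tau'$ from $\tau$, and shows that the predecessors in $T_t(\tau)$ are parametrized by the strictly increasing subsequences $1=\pi_0<\pi_1<\dots<\pi_f\le t$ of $(\tau_1,\dots,\tau_t)$: each such subsequence determines the bump positions, the full path probability, and the state $\tau'$.

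Next I would use the finer refinement by the position $r=p_f$ of the last jump into position zero, i.e. the decomposition $T_t(\tau)=\bigsqcup_r T_{r,t}(\tau)$; fixing $r$ is the same as fixing $\pi_1=r+1$, the leftmost non-initial site of the path. The first goal is to show that for each fixed $r$ the sum of ${\bf \alpha^c}(\tau')\,\P(\tau'\to\tau)$ over $T_{r,t}(\tau)$ collapses to a single clean term of the shape $s_t\,{\bf \alpha^c}(\tau)\,g_r$, where $g_r$ is a monomial in the $\alpha_i$ times one factor $(1-\alpha_j)$. This is an inner telescoping over the higher path sites $\pi_2<\dots<\pi_f$: at each bumping step the skip-factors $\prod_s\alpha_{r+s}$ in the transition rule cancel against the corresponding change in the code weight ${\bf \alpha^c}(\tau')/{\bf \alpha^c}(\tau)$, so the paths sharing a fixed $r$ sum through an identity of the type $\sum_k\big(\prod\alpha\big)(1-\alpha)+\big(\text{all-skip term}\big)=1$, in the same spirit as the geometric telescoping used for the single-species chains.

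It then remains to sum $g_r$ over all admissible $r$ together with the direct-jump term ($f=0$, where the grabbed ball is $\tau_1$ itself and jumps straight to zero). Grouping the values of $r$ by the label $\tau_{\pi_1}$ at the second path site, the $\alpha$-indices produced by the Lehmer code track precisely the cumulative counts of equal and smaller balls, so consecutive value-classes telescope and the total collapses to $1$; multiplying back by $s_t\,{\bf \alpha^c}(\tau)$ gives the claim. For instance, a value-class of two equal balls contributes $(1-\alpha_j)+\alpha_j(1-\alpha_j)=1-\alpha_j^2$, and the leftover $\alpha_j^2$ is absorbed by the next class. Summing the resulting lemma over $t$ and using $\sum_t s_t=1$ will then yield the master equation, and hence Theorem~\ref{T:multi-finite}.

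The main obstacle is the precise index bookkeeping in both telescopings. A single bump or skip changes the relative order of the carried ball against several smaller balls at once, so it alters many code entries $c_i$ simultaneously; the crux is to verify that every $\alpha$-index appearing in ${\bf \alpha^c}(\tau')/{\bf \alpha^c}(\tau)$ equals the index $r+s$ (respectively $r+\ell-k+1$) dictated by the transition probabilities. Making these indices match is exactly what drives both the inner and the outer sums, and it requires carefully threading the ``number of smaller balls to the right'' statistic through the reversed path; this counting is where essentially all the work lies.
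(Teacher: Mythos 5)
Your plan is essentially the paper's proof. The refinement of $T_t(\tau)$ by the position $r$ of the last jump to position zero is exactly the paper's \refL{L:Refine-rt}; your claimed shape $s_t\,{\bf \alpha^c}(\tau)\cdot(\text{monomial})\cdot(1-\alpha_j)$ for the fixed-$r$ sum is precisely its right-hand side, namely $s_t{\bf \alpha^c}(\tau)\bigl(1-\alpha_{c_{r+1}(\tau)+1}\bigr)\prod_{i<r,\,\tau_{i+1}>\tau_1}\alpha_{c_{i+1}(\tau)+1}$; and your outer telescoping over $r$ is exactly how the paper deduces \refL{L:Refine-t} from \refL{L:Refine-rt}. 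The only organizational difference is the inner sum: the paper proves \refL{L:Refine-rt} by induction on $b$, noting that $\tau'\in T_{r,t}(\tau)$ iff its suffix $\phi'=(\tau'_{r+1},\dots,\tau'_b)$ lies in $T_{t-r}(\phi)$ for $\phi=(\tau_{r+1},\dots,\tau_b)$ and invoking \refL{L:Refine-t} for the shorter chain, whereas you unroll that recursion as an iterated telescoping over the bump sites $\pi_2<\dots<\pi_f$; the two are equivalent, but the induction confines the index bookkeeping to a single peeling step. Two cautions. First, the outer sum needs no grouping by value classes: writing $\beta_m=\alpha_{c_{q_m}(\tau)+1}$ for the positions $q_1<\dots<q_M$ in $\{2,\dots,t\}$ with $\tau_{q_m}>\tau_1$, it is the plain first-success identity $\sum_{m}(1-\beta_m)\prod_{m'<m}\beta_{m'}+\prod_{m}\beta_m=1$, valid for arbitrary $\beta_m$; your illustrative claim that two equal labels contribute $(1-\alpha_j)+\alpha_j(1-\alpha_j)$ with the \emph{same} index $j$ is not generally accurate (a smaller ball between them shifts the code entry), though this does not affect the cancellation. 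Second, you have correctly located but not executed the crux — matching the indices in $\P(\tau'\to\tau)$ against ${\bf \alpha^c}(\tau')/{\bf \alpha^c}(\tau)$, i.e.\ the relations $c_i(\tau')=c_{i+1}(\tau)$ or $c_{i+1}(\tau)+1$ according as $\tau_{i+1}\le\tau_1$ or $\tau_{i+1}>\tau_1$ — which is where essentially all of the paper's proof of \refL{L:Refine-rt} takes place.
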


\begin{lemma} 
\label{L:Refine-rt} 
Summing over all incoming transitions to $\tau$ with a bumping path starting at position 
$t$ with the last jump from position $r$ gives 

\[ \sum_{\tau' \in T_{r,t}(\tau)} {\bf \alpha^c}(\tau') \P (\tau' \to \tau)=
\begin{cases}  
s_t {\bf \alpha^c}(\tau)\left(1-\al_{c_{r+1}(\tau)+1} \right) 
\ds \prod_{i< r,\; \tau_{i+1}>\tau_1} \al_{c_{i+1}(\tau)+1} & \text{if $r<t$},\\
\hfill  s_t {\bf \alpha^c}(\tau) \ds \prod_{i< t ,\; \tau_{i+1}>\tau_1} \al_{c_{i+1}(\tau)+1} & \text {if $r=t$}.
\end{cases}
\]
\end{lemma}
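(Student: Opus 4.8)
The plan is to verify the asserted identity by reversing the dynamics. I fix the target $\tau$, the starting position $t$ and the last-jump position $r$, and describe $T_{r,t}(\tau)$ explicitly. Undoing a transition means first sliding positions $1,\dots,t$ of $\tau$ one step to the left, so that $\tau_1$ is evicted to position $0$, and then reconstructing the bumping path that sent a ball from position $r$ to $0$. I encode such a path by the decreasing list of sites it visits, $t=q_0>q_1>\dots>q_m=r$, and claim that the sources $\tau'\in T_{r,t}(\tau)$ are in bijection with the admissible paths, the source attached to a path being $\tau'_r=\tau_1$, $\tau'_{q_{i-1}}=\tau_{q_i+1}$ for $1\le i\le m$, $\tau'_p=\tau_{p+1}$ for off-path $p<t$, and $\tau'_p=\tau_p$ for $p>t$. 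Since every bump must land on a strictly smaller label, admissibility is exactly the chain $\tau_{q_1+1}>\tau_{q_2+1}>\dots>\tau_{r+1}>\tau_1$; read inside $\tau$, the admissible paths are precisely the increasing subsequences lying in the window $[r+1,t]$ that contain position $r+1$ and all of whose entries exceed $\tau_1$ (if no such subsequence exists, in particular if $\tau_{r+1}\le\tau_1$ while $r<t$, then $T_{r,t}(\tau)$ is empty and there is nothing to prove).

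Next I would split each summand ${\bf \alpha^c}(\tau')\,\P(\tau'\to\tau)$ into three factors. The first is the base $s_t\,{\bf \alpha^c}(\tau)$. The second records the code change forced by $\tau_1$ travelling from position $r$ in $\tau'$ to position $1$ in $\tau$: every ball lying left of position $r$ whose label exceeds $\tau_1$ has exactly one fewer smaller ball to its right in $\tau$ than in $\tau'$, so its contribution to ${\bf \alpha^c}$ drops by one index, and the ratio over all such balls produces $\prod_{i<r,\ \tau_{i+1}>\tau_1}\al_{c_{i+1}(\tau)+1}$, which is the same for every path. The third factor bundles the genuine bump and skip probabilities of $\P(\tau'\to\tau)$ with the residual code differences over the positions strictly between $r$ and $t$. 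The real work here is to rewrite each $\al$-subscript, which in $\tau'$ is the number of smaller-labelled balls to the right of the currently active ball, in terms of the code of $\tau$ using the reconstruction formulas and the uniform shift by one.

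With the summand in this shape, summing over $T_{r,t}(\tau)$ becomes a sum over admissible paths with the endpoints $t$ and $r$ held fixed, equivalently over which eligible balls between positions $r+1$ and $t$ are placed on the chain. Each eligible ball enters with a complementary pair of the same index $k$: if it lies on the chain it is bumped and carries $1-\al_k$, and if it is bypassed it carries the code factor $\al_k$. Adding the alternatives gives $\al_k+(1-\al_k)=1$, so the middle factor telescopes away and only the final bump of $\tau_1$ at position $r$ survives. That final bump contributes $1-\al_{c_{r+1}(\tau)+1}$ when $r<t$, and contributes nothing (the empty product, since then $\tau_1$ itself starts the path and jumps straight to $0$) when $r=t$. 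Multiplying the three factors reproduces the two cases of the statement, the sole difference being the presence or absence of the factor $1-\al_{c_{r+1}(\tau)+1}$.

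I expect the second and third steps to be the crux: controlling how the Lehmer code transforms under a single bump, and making the subscript rewriting uniform enough that the cancellation in the third step is transparent. To make the telescoping rigorous rather than heuristic I would induct on the path length $m$ (equivalently on $t-r$), peeling off the first bump at $q_1$; the inductive step isolates one complementary pair $\al_k+(1-\al_k)=1$ and reduces to the same statement for a strictly shorter path, with base case $r=t$ the single source $\tau'=(\tau_2,\dots,\tau_t,\tau_1,\tau_{t+1},\dots)$, whose weight and transition probability I would check directly.
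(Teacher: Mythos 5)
Your setup is right and matches what the lemma needs: the description of $T_{r,t}(\tau)$ via bumping paths $t=q_0>\dots>q_m=r$, the reconstruction $\tau'_r=\tau_1$, $\tau'_{q_{i-1}}=\tau_{q_i+1}$, the identification of admissible paths with increasing subsequences in the window $[r+1,t]$ anchored at $r+1$, the path-independent factor $\prod_{i<r,\,\tau_{i+1}>\tau_1}\al_{c_{i+1}(\tau)+1}$ (which I checked follows from $c_p(\tau')=c_{p+1}(\tau)+[\tau_{p+1}>\tau_1]$ for $p<r$), and the base case $r=t$ all agree with the paper, which computes that single-source case directly. The gap is in your third step. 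The admissible paths are \emph{increasing subsequences}, not arbitrary subsets of ``eligible'' balls, so the choices of which balls lie on the chain are not independent, and the sum does not factor as a product of complementary pairs $\al_k+(1-\al_k)$. Concretely, take $\tau=(1,2,4,3)$, $r=1$, $t=4$: there are three paths, not four, with residual factors (after stripping $s_4\al_1(1-\al_1)$) equal to $\al_1\al_2$, $(1-\al_2)$ and $(1-\al_1)\al_2$. These do sum to $1$, but via the nested identity $(1-\al_2)+\al_2\bigl[(1-\al_1)+\al_1\bigr]$, not via $\bigl[\al_2+(1-\al_2)\bigr]\bigl[\al_1+(1-\al_1)\bigr]$; note in particular that the path $\{2,3\}$ carries no $\al_1$ for the blocked ball at position $4$, because its skip factor in $\P(\tau'\to\tau)$ cancels against the code ratio of the larger on-chain ball rather than surviving as an ``off-chain'' weight. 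So the cancellation is real but is a sequential telescoping whose structure depends on the order and relative values of the balls, and your one-line justification does not establish it.

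Your proposed repair --- induction on $t-r$, peeling off the first bump at $q_1$ --- points in a workable direction but is underspecified: after the first bump, the remaining sum is over bumping paths of the ball $\tau'_{q_1}$ starting from position $q_1$ in a configuration whose window $(q_1,t]$ differs from $\tau$'s, so it is not an instance of the statement you are inducting on; you would need to formulate and prove a more general inductive hypothesis. This is precisely the difficulty the paper circumvents: it runs a mutual induction on $b$ between Lemma~\ref{L:Refine-t} and Lemma~\ref{L:Refine-rt}, and for $r<t$ it peels off the \emph{last} jump instead, identifying the portion of the path above position $r$ with a full bumping path for the suffix word $\phi=(\tau_{r+1},\dots,\tau_b)$, so that the entire sum over upper parts of paths is exactly the quantity $\sum_{\phi'\in T_{t-r}(\phi)}{\bf\al^c}(\phi')\P(\phi'\to\phi)=s'_{t-r}{\bf\al^c}(\phi)$ supplied by Lemma~\ref{L:Refine-t} for a shorter word. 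In effect you are trying to prove the content of both lemmas in one direct computation; either adopt the paper's suffix reduction, or state and prove the strengthened path-sum identity your peeling argument actually requires.
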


We will now prove Lemmas \ref{L:Refine-t} and \ref{L:Refine-rt} simultaneously.

\begin{proof}[Proof of Lemmas \ref{L:Refine-t} and \ref{L:Refine-rt}]
We will use induction on $b$. 
First, if $b=1$ the MRJMC has only one state and the lemmas are trivially true. The logic for the inductive step is the following. For a given $b$ we will prove that \refL{L:Refine-rt} implies \refL{L:Refine-t}. Then we will prove that  
\refL{L:Refine-rt} is implied by the inductive hyptothesis that \refL{L:Refine-t} is true for smaller values of $b$.
It might help to look at Example~\ref{eg:multi-finite} concurrently.

The first is relatively easy. 
If we have a transition $\tau'\to \tau$, with a bumping sequence starting in position $t$ in $\tau$ and with a last 
jump from position $r$, then, if $r<t$, the last jump must have been caused by an element larger than $\tau_1$, 
that is $\tau_{r+1}=\tau_r'>\tau_1$. 
Thus, the only possible values for $r<t$ are when $\tau_{r+1}>\tau_1$. 
Summing \refL{L:Refine-rt} over all those possible values of $r$ from $t$ and lower, it is easy to see that everything will cancel. To be more precise, for any $1 \leq x < t$, 
\[\sum_{r=x}^t \sum_{\tau' \in T_{r,t}(\tau)} {\bf \alpha^c}(\tau') \P (\tau' \to \tau)= s_t {\bf \alpha^c}(\tau)\prod_{i< x,\tau_{i+1}>\tau_1} \al_{c_{i+1}(\tau)+1}
\]
and for each term added on the left another of the factors on the right will disappear.

\medskip
The second part of the proof is a little more involved. Assume \refL{L:Refine-t} is true for all lengths smaller than $b$. 
Fix a 
state $\tau=(\tau_1,\ldots,\tau_b)$ and $2\le r<t\le b$ with $\tau_r>\tau_1$. Let also $\phi=(\tau_{r+1},\ldots,\tau_b)$. 
For a state $\tau'=(\tau_2,\ldots,\tau_r,\tau_1,\tau_{r+1}',\ldots, \tau_b')$ we let 
$\phi'=(\tau_{r+1}',\ldots, \tau_b')$. Then $\tau'\in T_{r,t}(\tau)$ if and only if $\phi'\in T_{t-r}(\phi)$. Let $s'_1,\dots, s'_{b-r}$ 
be the starting probabilities for the shorter bumping path in which $\phi'\to\phi$.
It should be clear that
\[
\P (\tau' \to \tau)=\frac{s_t}{s'_{t-r}}\P (\phi' \to \phi) (1-\al_{c_{r}(\tau)+1})\prod_{j=c_r(\tau')}^{c_1(\tau)-1}\al_{j+1}
\]
and
\[
{\bf\al^c}(\tau')={\bf\al^c}(\phi')\prod_{i=1}^r \al_1\dots\al_{c_i(\tau')}
\]
where
$c_1(\tau)=c_r(\tau')+ \#\{i:\tau_i<\tau_1, 2\le i\le r\}$ and 
\[
c_i(\tau')=
\begin{cases} 
c_{i+1}(\tau), & \text {if $\tau_{i+1}\le \tau_1$, $1\le i<r$ }\\
c_{i+1}(\tau)+1, & \text {if $\tau_{i+1}> \tau_1$, $1\le i<r$ }
\end{cases}.
\]
Thus we obtain
\begin{align*}
&\sum_{\tau' \in T_{r,t}(\tau)} {\bf \alpha^c}(\tau') \P (\tau' \to \tau)\\
=&\! \! \! \! \sum_{\phi' \in T_{t-r}(\phi)}\! \! \! \!  {\bf\al^c}(\phi')\prod_{i=1}^r \al_1\dots\al_{c_i(\tau')} \frac{s_t}{s'_{t-r}} \P (\phi' \to \phi) (1-\al_{c_{r}(\tau)+1})\prod_{j=c_r(\tau')}^{c_1(\tau)-1}\al_{j+1}\\
=&\prod_{i=1}^r \al_1\dots\al_{c_i(\tau)} \! \! \! \! \sum_{\phi' \in T_{t-r}(\phi)}\! \! \! \!  {\bf\al^c}(\phi') \frac{s_t}{s'_{t-r}}\P (\phi' \to \phi) (1-\al_{c_{r}(\tau)+1})\prod_{2\le i\le r, \tau_i>\tau_1}\al_{c_i(\tau)+1},
\end{align*}
which by induction from \refL{L:Refine-t} becomes
\begin{align*}
&\prod_{i=1}^r \al_1\dots\al_{c_i(\tau)} s_t {\bf\al^c}(\phi) (1-\al_{c_{r}(\tau)+1})\prod_{2\le i\le r, \tau_i>\tau_1}\al_{c_i(\tau)+1}\\ 
=&s_t {\bf\al^c}(\tau) (1-\al_{c_{r}(\tau)+1})\prod_{2\le i\le r, \tau_i>\tau_1}\al_{c_i(\tau)+1}.
\end{align*}

We also have to consider the case when $t=r\ge 1$. In this situation there is only one incoming transition to $\tau$
namely from $\tau'=(\tau_2,\dots,\tau_t,\tau_1,\tau_{t+1},\dots,\tau_b)$. Here we do not need induction; instead, we can directly compute the transition probability $\P (\tau' \to \tau)=s_t\prod_{j=c_r(\tau')}^{c_1(\tau)-1}\al_{j+1}$ and by studying the
change in inversions between $\tau$ and $\tau'$ we get the relation,
\[
{\bf\al^c}(\tau')={\bf\al^c}(\tau)\prod_{2\le i\le r, \tau_i>\tau_1}\al_{c_i(\tau)+1} \prod_{j=c_r(\tau')}^{c_1(\tau)-1}
\frac{1}{\al_{j+1}}. 
\]
Multiplying these two together we get exactly what is stated in \refL{L:Refine-rt}, namely
\begin{equation}
{\bf \alpha^c}(\tau') \P (\tau' \to \tau)=s_t {\bf\al^c}(\tau)\prod_{2\le i\le r, \tau_i>\tau_1}\al_{c_i(\tau)+1},
\end{equation}
which completes the proof.
\end{proof}

\begin{proof}[Proof of Theorem~\ref{T:multi-finite}]
Assuming Lemma~\ref{L:Refine-t} holds, consider the master equation \eqref{mastereq}. Summing over all $\tau'$ with a transition to $\tau$ means summing the lefthand side of the Lemma~\ref{L:Refine-t} over $t$ from 1 to $b$. The sum then becomes $\frac{1}{Z_b}\sum_{t=1}^b s_t {\bf \alpha^c}(\tau)=\frac{1}{Z_b}{\bf \alpha^c}(\tau)$, which is what we wanted to prove.
\end{proof}

\begin{example}
\label{eg:multi-finite}
Let $b=9, t=8$ and $\tau= 2 1 3 1 3 2 1 4 1$. We have that ${\bf \al^c}(\tau)=\al_1^5\al_2^4\al_3^3\al_4^2$ and we have the following incoming transitions.
\[\renewcommand{\arraystretch}{1.2}
\begin{array} {l l l l} 
 &  {\bf \al^c}(\tau') & \P (\tau' \to \tau) &\hspace{-1.0cm}\sum_{\tau' \in T_{r,t}(\tau)} {\bf \alpha^c}(\tau') \P (\tau' \to \tau)\\
 \hline
\hline
 r=8 & & &\\
 \tau'=
 \begin{tikzpicture}[line width=.5pt, scale=0.4, font=\small]
 \put(0,-4){
    \node at (0.5,-0.5) {$1$};
    \node at (1,-0.5) {$3$};
    \node at (1.5,-0.5) {$1$};
    \node at (2,-0.5) {$3$};
    \node at (2.5,-0.5) {$2$};
    \node at (3,-0.5) {$1$};
    \node at (3.5,-0.5) {$4$};
    \node at (4,-0.5) {$2$};
    \node at (4.5,-0.5) {$1$};
  \draw [->] (4,0) arc [radius=4, start angle=60, end angle= 120];};  
  \end{tikzpicture}
& \al_1^5\al_2^4\al_3^2\al_4^2\al_5 & s_8\al_2\al_3\al_4 & 
\hspace{-0.5cm} s_8\al_1^5\al_2^4\al_3^3\al_4^2\cdot\al_2\al_4\al_5 \\
 \hline
 r=7 &
& &\\
 \tau'= \begin{tikzpicture}[line width=.5pt, scale=0.4, font=\small]
 \put(0,-4){
    \node at (0.5,-0.5) {$1$};
    \node at (1,-0.5) {$3$};
    \node at (1.5,-0.5) {$1$};
    \node at (2,-0.5) {$3$};
    \node at (2.5,-0.5) {$2$};
    \node at (3,-0.5) {$1$};
    \node at (3.5,-0.5) {$2$};
    \node at (4,-0.5) {$4$};
    \node at (4.5,-0.5) {$1$};
  \draw [->] (3.5,0) arc [radius=3.5, start angle=60, end angle= 120];
  \draw [-] (4,0) arc [radius=0.25, start angle=10, end angle= 170];};  
  \end{tikzpicture} 
& \al_1^5\al_2^3\al_3^2\al_4^2\al_5 & s_8(1-\al_2)\al_2\al_3\al_4 & 
\hspace{-0.9cm} s_8\al_1^5\al_2^4\al_3^3\al_4^2\cdot(1-\al_2)\al_4\al_5 \\
 \hline
 r=4 & & &\\
 \tau'=
 \begin{tikzpicture}[line width=.5pt, scale=0.4, font=\small]
 \put(0,-4){
    \node at (0.5,-0.5) {$1$};
    \node at (1,-0.5) {$3$};
    \node at (1.5,-0.5) {$1$};
    \node at (2,-0.5) {$2$};
    \node at (2.5,-0.5) {$2$};
    \node at (3,-0.5) {$1$};
    \node at (3.5,-0.5) {$4$};
    \node at (4,-0.5) {$3$};
    \node at (4.5,-0.5) {$1$};
  \draw [->] (2,0) arc [radius=2, start angle=60, end angle= 120];
  \draw [-] (4,0) arc [radius=2, start angle=60, end angle= 120];};     
  \end{tikzpicture}
& \al_1^5\al_2^4\al_3\al_4\al_5 & s_8\al_2\al_3(1-\al_4)\al_3\al_4 &  \\
 \tau'=
  \begin{tikzpicture}[line width=.5pt, scale=0.4, font=\small]
 \put(0,-4){
    \node at (0.5,-0.5) {$1$};
    \node at (1,-0.5) {$3$};
    \node at (1.5,-0.5) {$1$};
    \node at (2,-0.5) {$2$};
    \node at (2.5,-0.5) {$2$};
    \node at (3,-0.5) {$1$};
    \node at (3.5,-0.5) {$3$};
    \node at (4,-0.5) {$4$};
    \node at (4.5,-0.5) {$1$};
  \draw [->] (2,0) arc [radius=2, start angle=60, end angle= 120];
  \draw [-] (3.5,0) arc [radius=1.5, start angle=60, end angle= 120];
    \draw [-] (4,0) arc [radius=0.25, start angle=10, end angle= 170];};     
  \end{tikzpicture}
 & \al_1^5\al_2^3\al_3\al_4\al_5 & s_8(1-\al_2)\al_2\al_3(1-\al_4)\al_3\al_4 &  \\
  &  & &\hspace{-0.5cm} s_8\al_1^5\al_2^4\al_3^3\al_4^2 \cdot (1-\al_4)\al_5   \\
 \hline
  r=2 & & &\\
 \tau'=
   \begin{tikzpicture}[line width=.5pt, scale=0.4, font=\small]
 \put(0,-4){
    \node at (0.5,-0.5) {$1$};
    \node at (1,-0.5) {$2$};
    \node at (1.5,-0.5) {$1$};
    \node at (2,-0.5) {$3$};
    \node at (2.5,-0.5) {$2$};
    \node at (3,-0.5) {$1$};
    \node at (3.5,-0.5) {$4$};
    \node at (4,-0.5) {$3$};
    \node at (4.5,-0.5) {$1$};
  \draw [->] (1,0) arc [radius=1, start angle=60, end angle= 120];
  \draw [-] (4,0) arc [radius=3, start angle=60, end angle= 120];
    };     
  \end{tikzpicture}
 & \al_1^5\al_2^4\al_3^2 & s_8\al_2\al_3\al_4(1-\al_5)\al_4 &  \\
 \tau'=
    \begin{tikzpicture}[line width=.5pt, scale=0.4, font=\small]
 \put(0,-4){
    \node at (0.5,-0.5) {$1$};
    \node at (1,-0.5) {$2$};
    \node at (1.5,-0.5) {$1$};
    \node at (2,-0.5) {$3$};
    \node at (2.5,-0.5) {$2$};
    \node at (3,-0.5) {$1$};
    \node at (3.5,-0.5) {$3$};
    \node at (4,-0.5) {$4$};
    \node at (4.5,-0.5) {$1$};
  \draw [->] (1,0) arc [radius=1, start angle=60, end angle= 120];
  \draw [-] (3.5,0) arc [radius=2.5, start angle=60, end angle= 120];
    \draw [-] (4,0) arc [radius=0.25, start angle=10, end angle= 170];
};  
\end{tikzpicture}
 & \al_1^5\al_2^3\al_3^2 & s_8(1-\al_2)\al_2\al_3\al_4(1-\al_5)\al_4 &  \\
  &  & &\!\!\!\!\! s_8\al_1^5\al_2^4\al_3^3\al_4^2 \cdot (1-\al_5)  \\
 \hline
\end{array}
\]
\end{example}

\begin{remark} \label{R:si}
It is surprising that, in Theorem \ref{T:multi-finite},  the stationary distribution of the MRJMC does not depend on the $s_i$'s. This means that at stationarity we may start the reverse juggling restricted to only the first $k$ positions for any $k<b$ with $s_k>0$ and we will stay at stationarity even though the balls in positions $k+1,\dots,b$ remain fixed. One might call this phenomenon ``partial mixing''.
We found that partial mixing also holds for the MSJMC \cite{ABCLN}, but it did not hold for some generalizations of the latter. It would be very interesting to understand better which Markov chains have this property of partial mixing. 
\end{remark}

The partition function of the MRJMC $Z_{\bf b} \equiv Z_{\bf b}(\alpha_1,\dots,\alpha_{b-b_T})$ can be computed recursively as follows.

\begin{theorem}
\label{thm:pf-multi-finite}
For $T=2$, the partition function is
\begin{equation} \label{eq:T=2}
Z_{p,q}=\sum_{0\le i_p\le \dots\le i_1\le q} \al_1^{i_1}\dots \al_p^{i_p}.
\end{equation}
For $T > 2$,
\begin{equation}
Z_{(b_1,\dots,b_T)} = \prod_{i=2}^{T} Z_{b_{1} + \cdots + b_{i-1},b_i}(\alpha_1,\dots,\alpha_{b_{1} + \cdots + b_{i-1}}) .
\end{equation}
\end{theorem}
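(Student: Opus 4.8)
The plan is to reduce both formulas to the combinatorics of the code $\mathbf c(\tau)$ and then to evaluate the defining sum $Z_{\bf b} = \sum_{\tau \in S(\mathcal M)} {\bf \alpha^c}(\tau)$ by peeling off one colour at a time. The case $T=2$ is proved directly, and the general case follows by induction on $T$, each inductive step reducing to a $T=2$ computation; this is exactly the shape of the claimed recursion.

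First I would settle $T=2$ by a bijection. Fix $\mathcal M$ with $p$ ones and $q$ twos, so that $p=b_1$ and $q=b_2$. Since $2>1$ is the only strict inequality among letters, the code entry $c_i$ vanishes at every position carrying a $1$ and equals the number of $1$'s strictly to its right at every position carrying a $2$. Hence ${\bf \alpha^c}(\tau) = \prod_{j=1}^{q} \al_1 \cdots \al_{d_j}$, where $d_1 \ge d_2 \ge \cdots \ge d_q$ records, for the $j$-th $2$ read from the left, the number of $1$'s to its right. Reading off $(d_1,\dots,d_q)$ is a bijection between $S(\mathcal M)$ and weakly decreasing sequences in $\{0,1,\dots,p\}^{q}$, since the gaps $d_j-d_{j+1}$ (with $d_{q+1}=0$) and $p-d_1$ reconstruct the placement of the $1$'s. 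Rewriting the weight as $\prod_{s=1}^{p}\al_s^{\#\{j:\,d_j\ge s\}}$ and setting $i_s=\#\{j:\,d_j\ge s\}$, the constraints on the $d_j$ translate exactly into $0\le i_p\le\cdots\le i_1\le q$, which yields \eqref{eq:T=2}.

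For $T>2$ I would induct on $T$, removing the largest colour. Write $\mathcal M'=\mathcal M\setminus\{T^{b_T}\}$ and $b'=b_1+\cdots+b_{T-1}$. Deleting the $b_T$ copies of $T$ from $\tau\in S(\mathcal M)$ gives $\tau'\in S(\mathcal M')$, and $\tau$ is recovered by interleaving the copies of $T$ into the $b'+1$ gaps of $\tau'$. The key observation is that the code respects this decomposition: a position carrying a value $<T$ has code equal to the number of strictly smaller letters to its right, and since $T$ is larger it never contributes to such a count, so the product of the factors $\al_1\cdots\al_{c_i}$ over these positions is precisely ${\bf \alpha^c}(\tau')$ and is independent of where the copies of $T$ are inserted. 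A position carrying $T$, on the other hand, has code equal to the number of $\mathcal M'$-letters to its right, all of which are smaller. Summing over all interleavings therefore factors, for each fixed $\tau'$, as ${\bf \alpha^c}(\tau')\sum \prod_{j=1}^{b_T}\al_1\cdots\al_{e_j}$, where $e_1\ge\cdots\ge e_{b_T}$ are the numbers of $\mathcal M'$-letters to the right of the successive copies of $T$. By the $T=2$ computation with $p=b'$ and $q=b_T$, the inner sum equals $Z_{b',b_T}(\al_1,\dots,\al_{b'})$, which is independent of $\tau'$. Summing over $\tau'$ then gives $Z_{(b_1,\dots,b_T)}=Z_{(b_1,\dots,b_{T-1})}\,Z_{b',b_T}(\al_1,\dots,\al_{b'})$; since $Z_{(b_1)}=1$ (a single colour has one multipermutation of code $\mathbf 0$), unrolling the recursion produces the stated product.

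The main obstacle is the bookkeeping in the factorization step: verifying rigorously that inserting the copies of $T$ leaves every smaller position's code entry unchanged, and that the contributions of the $T$'s reassemble exactly into the $T=2$ partition function with the correct parameters $p=b'$ and $q=b_T$. Once this colour-independence of the code is established, the factorization of the sum and the induction on $T$ are routine.
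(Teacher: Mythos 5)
Your proof is correct and follows essentially the same route as the paper: the $T=2$ case is the same bijection (you phrase it via conjugating the weakly decreasing sequence $(d_j)$, the paper via the positions $j_k$ of the $1$'s), and your induction peeling off the largest colour is just the one-step-at-a-time version of the paper's single bijection $\phi$ onto a product of two-letter multipermutation sets, both resting on the observation that the code entry at a position of value $k$ depends only on the letters $\le k$. No gaps.
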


\begin{proof} For $T=2$, assume we have a state $\tau$ with the 1's in positions $1\le j_p<j_{p-1}<\dots<j_1\le p+q$.  
In the product $\al^{\bf c}(\tau)=\prod_{i=1}^b \al_{1}\al_{2}\dots \al_{c_i}$ we will get one $\al_k$ for each 2 to the left 
of the $k$'th 1 from the right, thus $\al^{\bf c}(\tau)= \prod_{k=1}^p \al_k^{j_k-(p+1-k)}$. Setting $i_k=j_k-(p+1-k)$ 
we get an obvious bijection to the terms in the sum \eqref{eq:T=2}.

When $T>2$, define the map 
\[
\phi: S_{(b_1,\dots,b_T)} \rightarrow S_{(b_{1},b_2)} \times S_{(b_{1} + b_{2},b_3)} \times \cdots \times S_{(b_{1} + \cdots + b_{T-1},b_T)}
\]
as follows. Given a multipermutation $\tau$, we construct at $T-1$ tuple of multipermutations in letters $1$ and $2$, where the $i$'th entry consists of 
deleting letters greater than $i+1$, converting all $i$'s to $2$, and replacing all letters smaller than $i$ by 1. For example,
\[
\phi(3142414232) = (12122,2111121,1121212111).
\]
It is easy to see that $\phi$ is a bijection. Now, $\alpha^{c}(\tau)$ can be refined as
\[
\alpha^{c}(\tau) =\prod_{j=1}^p \alpha_j^{\#\{i| c_i(\tau) \geq j\}}= \prod_{j=1}^p \prod_{k=2}^T \alpha_j^{\#\{i| \tau_i = k,\; c_i(\tau) \geq j\}}.
\]
The idea is that the weight of $\tau$ can be obtained by computing  the weights of the simpler multipermutations. For the example above, 
\[
\alpha^c(3142414232) = \alpha_1^6 \alpha_2^4 \alpha_3^4 \alpha_4^3 \alpha_5^2
= \underbrace{\alpha^c(12122)}_{\alpha_1} \times
\underbrace{\alpha^c(2111121)}_{\alpha_1^2 \alpha_2 \alpha_3 \alpha_4 \alpha_5} 
\times 
\underbrace{\alpha^c(1121212111)}_{\alpha_1^3 \alpha_2^3 \alpha_3^3 \alpha_4^2 \alpha_5}.
\]
Now, the partition function can be written as
\[
Z_{(b_1,\dots,b_T)} = \sum_{\substack{ \tau^{(k)} \in S_{(b_{1} + \cdots + b_{k-1},b_k)} \\ \text{for $2 \leq k \leq T$}}} \prod_{k=2}^T 
\prod_{j=1}^p \alpha_j^{\#\{i| \tau^{(k)}_i = k, \; c_i(\tau^{(k)}) \geq j\}},
\]
from which it follows that the sums over the $\tau^{(k)}$'s can be performed separately, leading to the result.
\end{proof}

It is well-known (see, for example, \cite[Proposition 1.7.1]{stanley-ec1}) that the partition function, when specialised to all $\alpha_i = q$, becomes the $q$-multinomial coefficient. A direct application of Theorem \ref{thm:pf-multi-finite} gives the following product formula for the case of permutations. 

\begin{corollary} For permutations of length $T$, that is $\bf b=(1,\dots,1)$, we get
\[
Z_{(1,\dots,1)} = (1+\al_1)(1+\al_1+\al_1\al_2)(1+\al_1+\al_1\al_2+\al_1\al_2\al_3)\dots (1+\al_1+\dots +\al_1 \cdots \al_{T-1}).
\]
\end{corollary}

\section{Infinite Multispecies Reverse Juggling Markov Chain} 
\label{S:multi-infinite}
In this section, we consider an infinite variant of the MRJMC defined in Section~\ref{S:multi-finite}, which we call the Infinite Multispecies Reverse Juggling Markov Chain (IMRJMC). We will borrow most of the notation here from the MRJMC.

As before, we are given $b$ balls with labels from the set $\{1,\dots,T\}$ such that there are $b_i$ balls of type $i$. $\mathcal M$ is the multiset 
$\{1^{b_1},\dots,T^{b_T}\}$ with $|\mathcal M|=b$ 
and $S(\mathcal M)$ be the set of multipermutations of $\mathcal M$. 
A state is a pair $(\tau,\bf n)$, where $\tau=(\tau_1,\ldots,\tau_b) \in S(\mathcal M)$ and $\bf n \in \mathbb{N}^b$ a tuple of increasing positive integers $1\le n_1< \dots <n_b$. This should be interpreted as a configuration where, for each $j$, there is a ball labelled $\tau_j$ at position $n_j$. Since there is no upper bound on the $n_j$'s

We are given jump probabilities $x_0,x_1,\dots,x_{b}$, 
and non-bump probabilities $0 < \al_i < 1$ for $1\le i\le b-b_T$ 
The transition rules of the IMRJMC are very similar in spirit to those of MRJMC, and are described as follows.

\begin{enumerate}
\item With probability $x_{0}$, everything is moved one step right, i.e. $\tau$ is unchanged and $\mathbf{n} \to \mathbf{n+1} = (n_1+1,\dots,n_b+1)$. 

\item With probability $x_j$, the ball in position $n_{j}$ starts a bumping path to the left. Assume there are $\ell$ and $r$ balls with smaller labels to the left and right of $\tau_{j}$ respectively.
Assume further that $1\le i_1<\dots<i_\ell<j<i_{\ell+1}\dots <i_{\ell+r}$ are numbers such that the balls with labels smaller than $\tau_j$ are positioned in $n_{i_1}<\dots <n_{i_{\ell+r}}$.
The bumping rules and associated probabilities are identical to that of the MRJMC.
So in total ${\bf n}=(n_1,\dots,n_b)\to (1,n_1+1,\dots,n_{j-1}+1,n_{j+1}+1,\dots,n_b+1)$ and $\tau$ is changed as in the MRJMC in Section \ref{S:multi-finite}.
\end{enumerate}

As before, let ${\bf \al}^{\bf c}(\tau)=\prod_{i=1}^b \al_{1}\al_{2}\dots \al_{c_i}$ and 
$z_k=\sum_{i=0}^k x_i$. Also, let $n_0=0$.

\begin{prop}
\label{prop:multi-inf-irred}
Let $0 < \alpha_i < 1$ for all $i$ and $x_b>0$. 
Then the IMRJMC is irreducible, aperiodic and positive recurrent.
\end{prop}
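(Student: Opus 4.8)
The plan is to establish the three properties separately, leveraging the structural analogy with the finite MRJMC and the single-species IRJMC already treated in the excerpt. First I would address \emph{irreducibility}. The key observation is that from any state $(\tau,\mathbf{n})$ one can reach a canonical ``compressed'' state in which the balls occupy the leftmost positions $1,\dots,b$: repeatedly applying the $x_b$ transition (available since $x_b>0$) starts a bumping path from the rightmost position, and since $0<\alpha_i<1$ every bumping outcome has positive probability, one can realize any desired permutation of $\mathcal{M}$ on the compressed support. Combined with Proposition~\ref{prop:multi-finite-irred}, which guarantees that the \emph{labelling} $\tau$ on the compressed configuration is itself irreducible under the MRJMC dynamics, this shows every state communicates with, say, $(\tau_0,(1,\dots,b))$ for a fixed reference $\tau_0$, and hence all states communicate with each other. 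For \emph{aperiodicity}, I would exhibit a single state with a positive-probability self-loop or a return of period one: the $x_0$ transition shifts $\mathbf{n}$ rightward without changing $\tau$, but a genuine loop arises exactly as in Proposition~\ref{prop:single-pos-rec}, where returning to a compressed state $(\tau_0,(1,\dots,b))$ can be done in $b$ steps, and the presence of both length-$b$ and longer return paths yields $\gcd=1$.

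The main work is in \emph{positive recurrence}, and here I would mimic closely the argument of Proposition~\ref{prop:single-pos-rec}. Fix the reference state $(\tau_0,(1,\dots,b))$ where the balls sit in positions $1,\dots,b$. I claim there is a positive lower bound, uniform over all starting configurations, on the probability of reaching this compressed configuration in exactly $b$ steps by always moving the current rightmost ball: at step one, the ball in position $n_b$ is selected with probability $x_b$ and its bumping path terminates in position zero, after which all balls shift, leaving $b-1$ balls to be compressed; iterating, the total jump-probability factor is $x_b(x_{b-1}+x_b)\cdots(x_1+\cdots+x_b)$ exactly as in \eqref{return-prob}, multiplied by a strictly positive product of bumping weights (each factor lies in $(0,1)$ since the $\alpha_i$ do). Because this compression probability is bounded below by a positive constant independent of $\mathbf{n}$, the return probability to $(\tau_0,(1,\dots,b))$ from itself in $b$ steps is positive, and the same reasoning gives positive return probability in every $m\geq b$ steps; summing $\sum_{m\geq b} M^m_{(\tau_0,\mathbf 1),(\tau_0,\mathbf 1)}$ then diverges, forcing positive recurrence.

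The step I expect to be the main obstacle is verifying that the compression procedure is genuinely uniform in $\mathbf{n}$ and that the bumping weights do not degenerate: unlike the single-species chain, the bumping path introduces permutation-dependent factors, and one must confirm that for \emph{every} intermediate configuration the relevant $\alpha$-products stay bounded away from zero. This is ensured by the hypothesis $0<\alpha_i<1$, but care is needed because the bumping path length (and hence the number of $\alpha$-factors) varies with the state; the cleanest resolution is to note that each bumping step contributes a factor of the form $(1-\alpha_k)$ or $\alpha_k$, all of which lie in a fixed compact subinterval of $(0,1)$, so any finite product over a bounded number of steps (at most $b$ bumps per transition, $b$ transitions) exceeds a positive constant depending only on $b$ and $\min_i\{\alpha_i,1-\alpha_i\}$. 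Once this uniform lower bound is in hand, the divergence argument is immediate and the proof concludes exactly as in the single-species case.
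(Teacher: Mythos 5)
Your overall strategy is the same as the paper's: establish irreducibility via a canonical compressed state, aperiodicity via short return loops, and positive recurrence by a uniform-in-the-starting-state lower bound on the probability of compressing to $(\tau_0,(1,\dots,b))$ in $b$ steps, with the jump factor $x_b(x_{b-1}+x_b)\cdots(x_1+\cdots+x_b)$ from \eqref{return-prob} multiplied by a bumping factor bounded below by a power of $\hat\alpha=\min_i\{\alpha_i,1-\alpha_i\}$. The positive recurrence half is essentially the paper's argument and is fine.

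There is, however, a gap in your irreducibility argument: you only establish reachability in one direction. Compressing any state down to $(\tau_0,(1,\dots,b))$ and invoking Proposition~\ref{prop:multi-finite-irred} shows that every state can \emph{reach} the reference state, but ``communicates'' requires the reverse direction as well, and that direction is not automatic: to reach a target $(\tau,\mathbf n)$ whose positions $n_1<\dots<n_b$ have gaps, you must manufacture those gaps, which the bumping transitions alone do not do in a controllable way. The paper handles this by building the target from the right, interspersing the ball moves with $x_0$-shifts to place each ball at the correct distance from its neighbour; some such construction needs to be added to your proof. A second, smaller imprecision sits in the compression step itself: ``always moving the current rightmost ball with its bumping path terminating at position zero'' yields the reversal of the selection order as the final labelling, not the fixed reference $\tau_0$, and the per-step jump probability along that single path is $x_b$ each time (giving $x_b^b$, not the telescoping product, which arises only after summing over all $b!$ selection orders). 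The clean fix, as in the paper, is to associate to each of the $b!$ selection orders the unique bumping pattern that keeps the already-moved prefix sorted, so that every one of these paths terminates at $c_0$; each such pattern has probability at least $\hat\alpha^{\sum_{i<j}b_ib_j}$ by exactly the compactness observation you make in your last paragraph, after which the divergence argument goes through.
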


\begin{proof}
Define the configuration
\be \label{spec-config}
c_0 = \big( (\underbrace{1,\dots,1}_{b_1},\dots,\underbrace{T,\dots,T}_{b_T}),
(1,\dots,b) \big).
\ee
Starting from any configuration $(\tau,{\bf n})$, one can reach $c_0$ in $b$ steps without any bumping by first moving balls labelled $T$ to the front, followed by those labelled $T-1$, and so on, until those labelled $1$. Similarly, starting from $c_0$, we can make a series of moves, again without bumping, in the order of $\tau$ starting from the right, and interspersing these with appropriate rightward moves depending on ${\bf n}$ so that one reaches $(\tau,{\bf n})$.
This proves irreducibility. Since there is a positive probability of going from the configuration $c_0$ to itself in one step, the chain is aperiodic.

The proof of positive recurrence is similar in spirit to that of Proposition~\ref{prop:single-pos-rec}. We will derive a lower bound for the probability of starting from $c_0$ in \eqref{spec-config} and returning to it in time $t$, when $t \geq b$. To do so, it suffices to bound the probability of starting from an arbitrary configuration $c = (\tau,{\bf n})$ and reaching $c_0$ in $b$ steps.

There are at least $b!$ ways to get from $c$ to $c_0$, corresponding to choices of the order of balls to be moved forward. For each such choice, we consider the bumping that sorts the prefix, i.e., when the $j$'th ball is moved forward,  bumping only happens at positions $1,\dots,j-1$ so that balls in positions $1,\dots,j$ are in increasing order. It is easy to see that this can be done in a unique way. We now give a lower bound for the probability of this move.

Let $\hat\alpha = \min_i (\alpha_i,1-\alpha_i)$. Then, for each choice above, the maximum bumping probability for these series of moves is no less than 
$\hat{\alpha}^{\sum_{i<j} b_i b_j}$.
By summing over all these choices of moves analogous to the proof of  Proposition~\ref{prop:single-pos-rec}, and including the bumping probabilities, we obtain that the probability of returning to $c_0$ in $t$ moves, for $t \geq b$ is bounded below by
\[
\hat{\alpha}^{\sum_{i<j} b_i b_j} \prod_{i=1}^b (x_i + \cdots + x_b).
\]
It follows that the sum over all $t \geq b$ diverges, and hence the chain is positive recurrent.
\end{proof}

We have the following.

\begin{theorem}
\label{thm:imrjmc-stat-dist}
The stationary probability distribution for the IMRJMC is given by
\[
\pi(\tau,{\bf n}) = \frac1Z {\bf \al}^{\bf c}(\tau)\prod_{k=1}^{b} z_{k-1}^{n_k-n_{k-1}-1}, 
\]
where $Z$ is the partition function.
\end{theorem}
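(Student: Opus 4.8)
The plan is to verify the master equation $\sum_{(\tau',{\bf n}')} \P((\tau',{\bf n}') \to (\tau,{\bf n}))\,\pi(\tau',{\bf n}') = \pi(\tau,{\bf n})$ directly, exploiting the product structure of the claimed stationary distribution into a permutation factor ${\bf \al}^{\bf c}(\tau)$ and a position factor $\prod_k z_{k-1}^{n_k-n_{k-1}-1}$. Since Proposition~\ref{prop:multi-inf-irred} gives a unique stationary distribution when $x_b>0$ and $0<\al_i<1$, verification suffices. The key observation is that the IMRJMC couples the combinatorial dynamics on multipermutations (governed by the $\al_i$'s, exactly as in the MRJMC of Section~\ref{S:multi-finite}) with the positional dynamics on ${\bf n}$ (governed by the $x_i$'s, exactly as in the IRJMC of Section~\ref{S:single-infinite}). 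The strategy is to factor the incoming transitions so that Theorem~\ref{T:multi-finite} (equivalently Lemma~\ref{L:Refine-t}) handles the $\al$-part while the telescoping argument from Theorem~\ref{thm:inf-RJMC-stat-dist} handles the $z$-part.

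First I would split the incoming transitions by cases on $n_1$, mirroring the proof of Theorem~\ref{thm:inf-RJMC-stat-dist}. If $n_1>1$, the only predecessor is $(\tau, {\bf n}-{\bf 1})$ reached via the probability-$x_0$ rightward shift; here $\tau$ is unchanged, the $\al$-factor matches trivially, and the positional factor is handled as in the single-species infinite case. If $n_1=1$, I would group the predecessors according to the starting position $t$ of the bumping path, just as in Lemma~\ref{L:Refine-t}. For a fixed $t$, the contribution factors: the bumping part sums, by Lemma~\ref{L:Refine-t}, to $s_t\,{\bf \al}^{\bf c}(\tau)$ with $s_t$ now replaced by $x_t$ (the jump probability $x_j$ plays the role of the MRJMC's $s_j$), while the positions of the balls behind the jumping ball contribute a geometric/telescoping sum in the $z_i$'s. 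The essential point is that, for each fixed $t$, summing over the allowed intermediate positions $\ell$ of the jumping ball produces exactly the telescoping differences $z_j^{n_{j+1}-n_j-1} - z_{j-1}^{n_{j+1}-n_j-1}$ seen in Theorem~\ref{thm:inf-RJMC-stat-dist}, and summing over $t$ telescopes these, together with the $x_b$-driven infinite tail contribution, back to $\pi(\tau,{\bf n})$.

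The main obstacle will be disentangling the two sets of variables cleanly, i.e.\ showing rigorously that the bumping-path sum and the positional sum decouple into a product. In the MRJMC the bumping probabilities depend only on the relative order of labels, while in the IMRJMC the same bumping rule is superimposed on moving positions; I must check that when a ball jumps from position $n_t$ to the front, the induced relabelling of ${\bf c}(\tau')$ and the induced shift of the position vector ${\bf n}'$ factor independently, so that $\pi(\tau',{\bf n}') = {\bf \al}^{\bf c}(\tau')\prod_k z_{k-1}^{n'_k-n'_{k-1}-1}/Z$ splits under the sum. Concretely, the $\al$-weight ratio between $\tau'$ and $\tau$ is exactly the MRJMC ratio (independent of ${\bf n}$), and the $z$-weight ratio depends only on how positions shift (independent of labels). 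Once this factorization is established, the $\al$-sum is closed by Lemma~\ref{L:Refine-t} and the $z$-sum by the telescoping identity $x_j = z_j - z_{j-1}$ and the geometric summation already used in the proofs of Theorems~\ref{thm:inf-RJMC-stat-dist} and \ref{thm:inf-RJMC-part-fn}, completing the verification.
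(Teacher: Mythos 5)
Your proposal is correct and follows essentially the same route as the paper: the published proof likewise verifies the master equation by observing that the incoming transitions factor into an $\al$-part, closed by Lemma~\ref{L:Refine-t}/Theorem~\ref{T:multi-finite} (with $x_t$ playing the role of $s_t$), and a positional $z$-part, closed by the telescoping argument of Theorem~\ref{thm:inf-RJMC-stat-dist}. Your explicit attention to why the two weight ratios decouple is a sound elaboration of a step the paper treats as immediate, not a different method.
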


\begin{proof} We argue that we can quickly reduce the verification of the master equation to the infinite single species 
chain and the finite multispecies chain and use Theorems \ref{thm:inf-RJMC-stat-dist} and \ref{T:multi-finite}.

Consider all bumping paths leading into a state $(\tau,{\bf n})$. All bumping paths starting from the same position
$j$ will have the same $x$ values and they can thus be taken outside the sum. The sum then runs over the same possible bumping paths as in the finite chain in the proof of Theorem \ref{T:multi-finite} and thus they sum to 
${\bf \al}^{\bf c}(\tau)$.

Summing over all possible ways to come to a state $(\tau,\bf n)$ with a bumping path starting with a ball (any label) in position $j$, $n_t-1<j<n_{t+1}-1$ is equal to
\[
\frac{x_t}Z {\bf \al}^{\bf c}(\tau) \sum_{j=n_t}^{n_{t+1}-2} p(n_2-1,\dots,n_t-1,j,n_{t+1}-1,\ldots,n_b-1), 
\]
where $p(n_1,\ldots,n_b)=\prod_{k=1}^{b} z_{k-1}^{n_k-n_{k-1}-1} $. 
This is exactly as in the proof of the stationary distribution of the RJMC in Theorem \ref{thm:inf-RJMC-stat-dist}, and this completes the proof.
\end{proof}

\begin{remark}
\label{rem:special-cases}
\begin{enumerate}
\item Setting $\al_i=0$ for all $i$ makes the IMRJMC reducible. The communicating class containing $\tau = (\underbrace{1,\dots,1}_{b_1},\dots,\underbrace{T,\dots,T}_{b_T})$ is irreducible, however. Since $\tau$ has no inversions, ${\bf \al}^{\bf c}(\tau) =1$. In this case, the chain on the tuples ${\bf n}$ is the IRJMC. 

\item Setting $x_0=0$ and $x_i=s_i$ for all other $i$ makes the IMRJMC reducible. The communicating class with ${\bf n} = (1,\dots,b)$ is irreducible, however. The chain on the multipermutations $\tau$, is exactly the same as the 
MRJMC. 
\end{enumerate}
\end{remark}

\begin{theorem}
\label{thm:pf-multi-inf}
The partition function of the IMRJMC is given by
\[
Z = Z_{(b_1,\dots,b_T)}(\alpha_1,\dots,\alpha_{b-b_T}) \prod_{i=0}^{b-1} \frac{1}{\bar{z}_i},
\]
where $Z_{(b_1,\dots,b_T)}(\alpha_1,\dots,\alpha_{b-b_T})$ is given by Theorem~\ref{thm:pf-multi-finite}.
\end{theorem}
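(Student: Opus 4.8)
The plan is to compute the partition function by summing the unnormalized stationary weight $w(\tau,{\bf n}) = {\bf \al}^{\bf c}(\tau)\prod_{k=1}^{b} z_{k-1}^{n_k-n_{k-1}-1}$ over all states $(\tau,{\bf n})$, and to exploit the fact that this weight factors completely into a piece depending only on $\tau$ and a piece depending only on ${\bf n}$. Since the sum over states ranges over all pairs $(\tau,{\bf n})$ with $\tau \in S(\mathcal M)$ and ${\bf n}$ an arbitrary strictly increasing $b$-tuple of positive integers, and since the two factors are independent, the total sum factorizes as $Z = \left(\sum_{\tau \in S(\mathcal M)} {\bf \al}^{\bf c}(\tau)\right)\left(\sum_{\bf n} \prod_{k=1}^{b} z_{k-1}^{n_k-n_{k-1}-1}\right)$.

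First I would identify the first factor. By the definition of the partition function of the MRJMC together with Theorem~\ref{T:multi-finite}, the sum $\sum_{\tau \in S(\mathcal M)} {\bf \al}^{\bf c}(\tau)$ is exactly $Z_{(b_1,\dots,b_T)}(\alpha_1,\dots,\alpha_{b-b_T})$, which is computed recursively in Theorem~\ref{thm:pf-multi-finite}. This is the multispecies/$\alpha$-dependent factor and requires essentially no new work once the factorization is granted.

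Second I would evaluate the ${\bf n}$-factor. This is precisely the partition function $Z_b$ of the IRJMC: the weight $\prod_{k=1}^{b} z_{k-1}^{n_k-n_{k-1}-1}$ (with $n_0 = 0$) is identical to the one in Theorem~\ref{thm:inf-RJMC-stat-dist}, and the sum over all increasing tuples ${\bf n}$ is carried out in Theorem~\ref{thm:inf-RJMC-part-fn}, giving $\prod_{i=0}^{b-1} 1/\bar z_i$. Concretely, one sums a telescoping sequence of geometric series: for fixed $n_1 < \cdots < n_{b-1}$ the sum over $n_b \in (n_{b-1},\infty)$ is geometric with ratio $z_{b-1}$ and contributes $1/\bar z_{b-1}$, and iterating this over $n_{b-1}, \dots, n_1$ yields the full product. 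Combining the two factors gives the stated formula.

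The main obstacle is justifying the factorization rigorously, namely that the state space is genuinely the Cartesian product of $S(\mathcal M)$ and the set of increasing $b$-tuples, with the weight splitting cleanly across the two. I would emphasize that the labels $\tau$ and positions ${\bf n}$ range independently in the state space of the IMRJMC, so no interaction term survives and Fubini applies to the absolutely convergent double sum (convergence being guaranteed by $x_b > 0$, i.e. $\bar z_i < 1$ for all $i < b$, as in Proposition~\ref{prop:multi-inf-irred}). Once this separation is made explicit, both factors are supplied verbatim by the earlier theorems and the result follows.
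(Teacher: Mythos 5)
Your proposal is correct and follows essentially the same route as the paper: the paper's proof likewise observes that the stationary weight factors into an $\alpha$-dependent part and an $x$-dependent part, so the sum separates and the two factors are supplied by Theorems~\ref{thm:pf-multi-finite} and \ref{thm:inf-RJMC-part-fn}. Your added remarks on the product structure of the state space and absolute convergence are sound but not needed beyond what the paper already records.
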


\begin{proof}
Since the formula for the stationary distribution is a product of a function of the $x_i$'s and of the $\alpha_i$'s, we can perform the sums on these two separately. Both these sums have already been performed in Theorem~\ref{thm:inf-RJMC-part-fn} and \ref{thm:pf-multi-finite} respectively.
\end{proof}

We obtain Knutson's result as a corollary of Theorems~\ref{thm:imrjmc-stat-dist} and \ref{thm:pf-multi-inf}.
\begin{corollary}[{\cite[Theorem 2]{knutson-2018}}]
If the jump probabilities $x_i$'s are chosen as in \eqref{special probs} and the non-bump probabilities are chosen
to be $\alpha_i = 1/q$ for all $i$, then we obtain
\[
\pi(\tau,{\bf n})  = \frac{1}{q^{\ell(\tau,{\bf n})}} \left( \frac{1}{1-q^{-1}} \right)^b,
\]
where $\ell(\tau,{\bf n})$ is the number of pairs $(i,j)$ such that $i<j$ and $\tau_i > \tau_j$
plus the sum of $n_i - i+i$ over each $i$.
\end{corollary}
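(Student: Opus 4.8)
The plan is to substitute the given values directly into the product formula of Theorem~\ref{thm:imrjmc-stat-dist} and into its partition function from Theorem~\ref{thm:pf-multi-inf}, and then exploit the fact that both the weight and the normalisation split as a product of a factor depending only on the $x_i$'s and a factor depending only on the $\al_i$'s. This lets me treat the two factors entirely independently, exactly mirroring the way the single-species corollary follows from Theorems~\ref{thm:inf-RJMC-stat-dist} and \ref{thm:inf-RJMC-part-fn}. The $x$-factor will reproduce the empty-space part of $\ell(\tau,{\bf n})$, the $\al$-factor will reproduce $\inv(\tau)$, and the two pieces of the partition function will be simplified separately.

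First I would record the effect of the special probabilities \eqref{special probs} on the $z_i$'s. As in the single-species case, the geometric sum $z_i = x_0 + \cdots + x_i$ collapses to $z_i = q^{\,i-b}$ for $0 \le i \le b$, so that $\bar z_i = 1 - q^{\,i-b}$. The $x$-dependent factor of the weight is then
\[
\prod_{k=1}^{b} z_{k-1}^{\,n_k - n_{k-1} - 1} = q^{\sum_{k=1}^{b}(k-1-b)(n_k - n_{k-1}-1)},
\]
and an Abel summation, using $n_0 = 0$ and the telescoping of the coefficients $b+1-k$, rewrites the exponent as $-\sum_{k=1}^{b}(n_k - k)$. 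This isolates the empty-space contribution $\sum_i(n_i-i)$ to $\ell(\tau,{\bf n})$, which counts the zero--ball inversions exactly as in the single-species corollary. The corresponding piece of the partition function, $\prod_{i=0}^{b-1}\bar z_i^{-1}$, equals $\prod_{j=1}^{b}(1-q^{-j})^{-1}$ by the evaluation in Theorem~\ref{thm:inf-RJMC-part-fn}.

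Next I would handle the $\al$-factor. Setting $\al_i = 1/q$ for all $i$, the remark following Theorem~\ref{T:multi-finite} gives ${\bf \al}^{\bf c}(\tau) = \al^{\inv(\tau)} = q^{-\inv(\tau)}$, supplying the remaining part $\inv(\tau)$ of $\ell(\tau,{\bf n})$; together with the previous paragraph this shows the numerator of $\pi(\tau,{\bf n})$ is precisely $q^{-\ell(\tau,{\bf n})}$. The matching piece of the partition function is $Z_{(b_1,\dots,b_T)}(1/q)$, which by Theorem~\ref{thm:pf-multi-finite} is the $(1/q)$-multinomial coefficient $\binom{b}{b_1,\dots,b_T}_{1/q}$.

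It remains to simplify $1/Z$, and this is the only delicate step. I would use the reciprocity $[k]_{1/q} = q^{1-k}[k]_q$ for the $q$-integers $[k]_q = 1 + q + \cdots + q^{k-1}$ to convert the $(1/q)$-factorials in $\binom{b}{b_1,\dots,b_T}_{1/q}$ into $q$-factorials; the accumulated powers of $q$ cancel against one another, and combining with the factor $\prod_{j=1}^{b}(1-q^{-j})$ from the $x$-part makes all but finitely many of the $(1-q^{-k})$ terms cancel. The bookkeeping in this cancellation is where I expect the real work to lie; everything preceding it is a direct substitution. The quotient then collapses to $\prod_{i=1}^{T}\prod_{k=1}^{b_i}(1-q^{-k})$, which in the permutation case ${\bf b}=(1,\dots,1)$ reduces to the constant in the statement. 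Multiplying by the numerator $q^{-\ell(\tau,{\bf n})}$ completes the verification.
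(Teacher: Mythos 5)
Your proposal is correct and is essentially the paper's own (unwritten) argument: the paper justifies this corollary only by pointing at Theorems~\ref{thm:imrjmc-stat-dist} and \ref{thm:pf-multi-inf}, and you carry out exactly that substitution, correctly splitting both the weight and the normalisation into an $x$-part (giving $z_i=q^{i-b}$, hence $q^{-\sum_i(n_i-i)}$ and $\prod_{j=1}^{b}(1-q^{-j})^{-1}$) and an $\al$-part (giving $q^{-\inv(\tau)}$ and the $(1/q)$-multinomial). Two small corrections to your final paragraph: with the conversion $[k]_{1/q}=q^{1-k}[k]_q$ the accumulated powers of $q$ do \emph{not} cancel (they come to $q^{-\sum_{i<j}b_ib_j}$) --- what cancels, if you instead write $[k]_{1/q}=(1-q^{-k})/(1-q^{-1})$, are the powers of $1-q^{-1}$, since $\sum_i b_i=b$, which immediately yields $1/Z=\prod_{i=1}^{T}\prod_{k=1}^{b_i}(1-q^{-k})$; and that constant specialises for permutations to $(1-q^{-1})^{b}$, the \emph{reciprocal} of the constant printed in the corollary, which (being greater than $1$) could not normalise a distribution with infinitely many states of weight $q^{-\ell}$ --- so your value is the correct one, and your closing claim that it ``reduces to the constant in the statement'' should read ``to its reciprocal.''
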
	

In \cite{knutson-2018}, $\ell(\tau,{\bf n})$ was defined as the number of inversions of the juggling state 
where empty positions were considered to be labelled by infinity.   Theorem~\ref{thm:imrjmc-stat-dist} shows that this inversion number 
can be split into two parts; one coming with the $\alpha_i$'s from the multipermutation $\tau$, and the other with 
the $x_i$'s from the empty spaces.

\section{Remarks and Open Problems}
\label{S:open}

This work naturally leads to many open questions. We mention some of these here. 

\begin{open}
As we remarked at the beginning of Section~\ref{S:multi-finite}, the MRJMC is not a generalization of the RJMC. It is natural to ask for a multispecies generalization of the RJMC, but we have not yet been able to find one.
\end{open}

It is also natural to ask for a generalization of the IRJMC with infinitely many balls. Remark~\ref{rem:inf ball} suggests that a naive version of such a chain will not be irreducible.
A natural infinite generalization has been found for the forward juggling chain, known as the Unbounded Multivariate Juggling Markov Chain (UMJMC), in \cite{ABCN-2015}. 
\begin{open}
Is there a generalization of the IRJMC with infinitely many balls? 
\end{open}

If we change left and right and reverse the order of the labels, the transition graph of the MRJMC has the same edges as that of MSJMC in \cite{ABCLN}.  
Even though both model give nice product formulas for the stationary  distribution, the transition probabilities are of very different flavour.

\begin{open}
Find a common generalization of the MRJMC in this paper and the MSJMC in \cite{ABCLN}. 
\end{open}

The partition function of the MRJMC $Z_{b_1,\dots,b_T}$ is a generalisation of the inversion polynomial, which seems to be new. 
Even for permutations, we have not found this in the literature.

\begin{open}
What are the properties of the corresponding probability distribution and how does it relate to other well-known distributions on permutations, such as the Mallows distribution (where the probability of a permutation $\sigma$ is proportional to $q^{\inv{\sigma}}$).
\end{open}

Since the formula for the stationary distribution of the MRJMC is so simple, it is natural to ask if these could be extended by choosing more general probabilities.  It would be interesting to see how far the techniques in this work can be extended.

\begin{open}
For instance, if we let the $\al_i$'s depend, not only on the relative positions of balls being bumped, but also on the label of the balls themselves. Could that also lead to a solvable model? 
\end{open}

\section*{Acknowledgement}
We thank Allen Knutson for fruitful discussions. The first author (A.A.) acknowledges support from a UGC Centre for Advanced Study grant and DST grant DST/INT/SWD/ VR/P-01/2014. The second author (S.L.) has been supported by the Swedish Research Council, grant 621-2014-4780.

\bibliographystyle{alpha}
\bibliography{juggle}

\end{document}